\setlist[description]{leftmargin=0cm,  labelindent=\parindent}
\newcommand{\Wedge}{\textstyle\bigwedge}
\newcommand{\Vol}{\hbox{Vol}}
\DeclareFontFamily{OT1}{rsfs}{}
\DeclareFontShape{OT1}{rsfs}{n}{it}{<-> rsfs10}{}
\DeclareMathAlphabet{\curly}{OT1}{rsfs}{n}{it}
\DeclareMathOperator{\Def}{Def}
\DeclareMathOperator{\Grass}{Grass}
\title[$\del\delbar$-complex symplectic manifolds]{$\del\delbar$-complex symplectic and Calabi-Yau manifolds: Albanese map, deformations and period maps}
\author{Ben Anthes}
\address{Ben Anthes\\FB 12/Mathematik und Informatik\\
Philipps-Universit\"at Marburg\\
Hans-Meerwein-Str. 6\\
35032 Marburg\\
Germany}
\email{anthes@mathematik.uni-marburg.de}
\author{Andrea Cattaneo}
\address{Andrea Cattaneo\\
Institut Camille Jordan\\
Universit\'e Claude Bernard Lyon 1\\
43, boulevard du 11 novembre 1918\\
69100 Villeurbanne, France}
\email{cattaneo@math.univ-lyon1.fr}
\author{S\"onke Rollenske}
\address{S\"onke Rollenske\\FB 12/Mathematik und Informatik\\
Philipps-Universit\"at Marburg\\
Hans-Meerwein-Str. 6\\
35032 Marburg\\
Germany}
\email{rollenske@mathematik.uni-marburg.de}
\author{Adriano Tomassini}
\address{Adriano Tomassini\\
Dipartimento di Scienze Matematiche, Fisiche e Informatiche\\
Plesso Matematico e Informatico\\
Universit\`a di Parma\\
Parco Area delle Scienze, 53/A, 43124\\
Parma, Italy}
\email{adriano.tomassini@unipr.it}
\begin{document}

\begin{abstract}
Let $X$ be a compact complex manifold with trivial canonical bundle and  satisfying the $\del\delbar$-Lemma. We show that the Kuranishi space of $X$ is a smooth universal deformation and that small deformations enjoy the same properties as $X$. If, in addition, $X$ admits a complex symplectic form, then the local Torelli theorem holds and we obtain some information about the period map.

We clarify the structure of such manifolds a little by showing that the Albanese map is a surjective submersion.
\end{abstract}

\maketitle
\setcounter{tocdepth}{1}
\tableofcontents

\section{Introduction}
Compact K\"ahler manifolds with trivial canonical bundle have attracted a large interest over the past decades, lying at the crossroads of differential geometry, algebraic geometry and mathematical physics.

In this paper, we relax the K\"ahler condition and only assume that the $\del\delbar$-Lemma holds, a condition which was introduced in \cite{DGMS}. It turns out that on a compact complex manifold $X$ with trivial canonical bundle the $\del\delbar$-Lemma guarantees that  $X$ has a smooth universal deformation. In addition,  all sufficiently small deformations are again of this type (Section \ref{sect: unobstructed defs}).
Note  that  the triviality of the canonical bundle in itself is not strong enough: both properties are known to fail without the $\del\delbar$-Lemma (see \cite{Ueno80, rollenske11} for examples).

A key result for compact K\"ahler manifolds with trivial canonical bundle is the Beauville--Bogomolov decomposition, which says that every such manifold is (up to a finite cover) a product of manifolds of three types:  complex tori, Calabi--Yau manifolds and irreducible holomorphic symplectic manifolds. In Section \ref{sect: albanese} we prove a weaker analogue of this and show by example that the decomposition theorem does not hold assuming only the $\del\delbar$-Lemma.

Of the above three types, irreducible holomorphic symplectic manifolds have the richest general theory and we study the following generalisation of this class in more detail.
\begin{defin}\label{def: simple}
A \emph{$\del\delbar$-complex symplectic manifold} is a pair $(X, \sigma)$ where $X$ is a compact complex manifold satisfying the $\del\delbar$-Lemma and $\sigma\in H^0(X, \Omega^2_X)$ is a $d$-closed holomorphic symplectic form. We say $X$ is \emph{simple} if $\sigma$ is unique up to scalars, that is, $h^{2,0}(X) = 1$.
\end{defin}
By \cite{CattaneoTomassini16}, on a $\del\delbar$-complex symplectic manifold $X$ one can define the  Beauville--Bogomolov--Fujiki quadratic form $q_\sigma$ on $H^2_{dR}(X, \IC)$ (see Definition \ref{def: bbf-form}). Assuming that $X$ is simple, we show that this quadratic form behaves very much like in the irreducible holomorphic symplectic case. For example, in this case, the period map identifies the universal deformation space with an open subset of the quadric defined by $q_\sigma$ (Section \ref{sec: local torelli}).

In the non-simple case, the local Torelli theorem still turns out to hold, but the relation between the quadratic form and the deformation space breaks down, as we show in Section \ref{sect: non-primitive periods}.

Throughout the paper we mention open questions about the only partially explored class of $\del\delbar$-complex symplectic manifolds.

\subsection*{Acknowledgments}
Ben Anthes and S\"onke Rollenske gratefully acknowledge support from the DFG via the Emmy Noether program.
We enjoyed discussions on parts of this project with Andreas Krug and we thank Nicolas Perrin for some discussions about Gra{\ss}mannians. Adriano Tomassini is granted with a research fellowship by Istituto Nazionale di Alta Matematica INdAM, and is supported by the Project PRIN ``Variet\`a reali e complesse: geometria, topologia e analisi armonica'', by the Project FIRB ``Geometria Differenziale e Teoria Geometrica delle Funzioni'', and by GNSAGA of INdAM. Andrea Cattaneo is supported by Labex MiLyon and by GNSAGA of INdAM.

Andrea Cattaneo and Adriano Tomassini are grateful to Ben Anthes and S\"onke Rollenske for their kind hospitality at Marburg University during the preparation of the paper.

\subsection*{Notation}
Throughout this article, we work with complex manifolds.
If $X$ is a compact complex manifold, we consider the double complex $(\ka^{\ast,\ast}(X),\del,\delbar)$ of smooth complex valued differential forms on $X$, where $d = \del+\delbar$ is the usual decomposition.

Recall that $X$ is said to satisfy the \emph{$\del\delbar$-Lemma} if 
\[\ker \del \cap \ker \delbar \cap (\im \del + \im \delbar) = \im \del \delbar,\]
see \cite{DGMS} or \cite{Angella14} for further discussion.

\section{Structure of the Albanese map}\label{sect: albanese}
In this section we study the structure of the Albanese map of compact complex  manifolds with trivial canonical bundle and satisfying the $\del\delbar$-Lemma. This line of inquiry is inspired by work of  Matsushima \cite{MR0364670}, Lichnerowicz \cite{MR0253253} and Kawamata \cite[Thm.\ 24]{Kawamata81}.

\begin{lem}\label{lem: wedge omegabar}
Let $X$ be a compact complex manifold of dimension $n$ satisfying the $\del\delbar$-Lemma. Assume $\Omega$ is a nowhere vanishing holomorphic $n$-form. Then the linear maps
\begin{gather*}
  \Lambda_{\Omega}\colon H^{0,q}_{\delbar}(X) \to H^{n,q}_{\delbar}(X), \quad [\alpha] \mapsto [\Omega \wedge \alpha]\\
  \Lambda_{\bar \Omega}\colon H^{p,0}_{\delbar}(X) \to H^{p,n}_{\delbar}(X), \quad [\alpha] \mapsto [\bar\Omega \wedge \alpha]
\end{gather*}
are both isomorphisms.
\end{lem}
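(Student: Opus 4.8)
The plan is to handle the two maps separately. The map $\Lambda_\Omega$ is essentially formal: since $\Omega$ is nowhere vanishing, $\alpha\mapsto\Omega\wedge\alpha$ is an isomorphism of sheaves $\mathcal{O}_X\xrightarrow{\sim}\Omega^n_X$, hence induces an isomorphism of Dolbeault complexes $(\ka^{0,\bullet}(X),\delbar)\to(\ka^{n,\bullet}(X),\delbar)$ (it commutes with $\delbar$ up to the sign $(-1)^n$, as $\delbar\Omega=0$), and therefore isomorphisms $H^{0,q}_{\delbar}(X)\xrightarrow{\sim}H^{n,q}_{\delbar}(X)$ for all $q$. This step does not even use the $\del\delbar$-Lemma; that enters only for $\Lambda_{\bar\Omega}$, which I would reduce to $\Lambda_\Omega$ via complex conjugation.

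For $\Lambda_{\bar\Omega}$ I would first record two standard consequences of the $\del\delbar$-Lemma on compact $X$ (see \cite{DGMS, Angella14}): (a) every holomorphic form is $d$-closed (for $\alpha$ holomorphic, $\del\alpha$ is $\del$-exact and $\delbar$-closed, hence lies in $\im\del\delbar$, which contains no nonzero form of type $(\ast,0)$, so $\del\alpha=0$); and (b) the Hodge symmetry $h^{p,q}(X)=h^{q,p}(X)$. To see $\Lambda_{\bar\Omega}$ is injective, let $\alpha$ be a holomorphic $p$-form with $\bar\Omega\wedge\alpha=\delbar\beta$. By (a), $\bar\alpha$ is a $\delbar$-closed $(0,p)$-form, and conjugating gives $\Omega\wedge\bar\alpha=\del\bar\beta$; thus $\Omega\wedge\bar\alpha$ is $\del$-exact and $\delbar$-closed (because $\delbar\bar\alpha=\overline{\del\alpha}=0$), hence lies in $\im\del\delbar$ by the $\del\delbar$-Lemma, so is in particular $\delbar$-exact. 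Then $\Lambda_\Omega[\bar\alpha]=[\Omega\wedge\bar\alpha]=0$, and since $\Lambda_\Omega$ is an isomorphism, $\bar\alpha$ is $\delbar$-exact; conjugating back, $\alpha$ is $\del$-exact, and being also $\delbar$-closed it lies in $\im\del\delbar$ by the $\del\delbar$-Lemma, whence $\alpha=0$ as it has type $(p,0)$. Finally $\dim H^{p,0}_{\delbar}(X)=h^{p,0}=h^{0,p}=h^{n,p}=h^{p,n}=\dim H^{p,n}_{\delbar}(X)$, using Hodge symmetry, the isomorphism $\Lambda_\Omega$, and Hodge symmetry again; an injective linear map between finite-dimensional vector spaces of equal dimension is an isomorphism.

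The main obstacle is this second half, specifically the bookkeeping forced by complex conjugation interchanging $\del$- and $\delbar$-cohomology: converting the $\del$-statements obtained by conjugation back into $\delbar$-statements is exactly where the $\del\delbar$-Lemma (``$\del$-exact and $\delbar$-closed $\Rightarrow$ $\del\delbar$-exact'', together with the Hodge symmetry it implies) is used. An alternative that bypasses the dimension count goes through Serre duality, realising $\Lambda_{\bar\Omega}$ up to the nonzero factor $\int_X\Omega\wedge\bar\Omega$ as the wedge pairing $H^0(X,\Omega^p_X)\times H^0(X,\Omega^{n-p}_X)\to H^0(X,\Omega^n_X)\cong\IC$ and showing this pairing is perfect; but that too relies on the same Hodge-theoretic input.
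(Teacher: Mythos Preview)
Your proof is correct and takes essentially the same approach as the paper: handle $\Lambda_\Omega$ formally via the sheaf isomorphism $\cdot\Omega\colon\ko_X\to\Omega^n_X$, then reduce $\Lambda_{\bar\Omega}$ to $\Lambda_\Omega$ via complex conjugation, with the $\del\delbar$-Lemma supplying the needed symmetry. The paper packages the second step more concisely by invoking the Hodge decomposition $H^k_{dR}=\bigoplus H^{p,q}_{\delbar}$ with $H^{p,q}_{\delbar}=\overline{H^{q,p}_{\delbar}}$ and concluding $\Lambda_{\bar\Omega}=\overline{\Lambda_\Omega}$ in one line, whereas you unpack the same content at the level of representatives (injectivity plus a dimension count).
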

\begin{proof}
If we denote by $\ka^{p,q}(X)$ the space of smooth forms of type $(p,q)$ and by $\ka^q(X, \Omega_X^p)$ the space of smooth forms of type $(0,q)$ with values in $\Omega^p_X$, the bundle of holomorphic $p$-forms, then the natural map
\[\ka^*(X, \Omega_X^p) \to \ka^{p,*}(X), \quad \phi\tensor \alpha \mapsto \phi\wedge \alpha\]
is an isomorphism. Thus, without using the $\del\delbar$-Lemma, we recognise $\Lambda_\Omega$ as the composition of isomorphisms
\[ \begin{tikzcd} H^{0,q}_{\delbar}(X) \rar{\isom} & H^q(X, \ko_X)\rar{\cdot \Omega} & H^q(X, \Omega^n_X) \rar {\isom} &  H^{n,q}_{\delbar}(X),
\end{tikzcd}\]
where the middle arrow is induced by the trivialising section $\cdot \Omega\colon \ko_X\to \Omega_X^n$.

We now use the $\del\delbar$-Lemma to the extent that on $X$ there is a decomposition
\[H^k_{dR}(X, \IC) = \bigoplus_{p+q=k} H^{p,q}_{\delbar}(X)\]
with the additional symmetry $H^{p,q}_{\delbar}(X)=\overline{H^{q,p}_{\delbar}(X)}$ under complex conjugation in the complex vector space $H^k_{dR}(X, \IC)$. 
Using those representatives for Dolbeault cohomology, we have $\Lambda_{\bar\Omega} = \overline{\Lambda_\Omega}$ and the claim follows.
\end{proof}

\begin{rem}
If in the situation of the Lemma the manifold $X$ is $\del\delbar$-complex symplectic with symplectic form $\sigma$ and $\Omega =\sigma^n$, then $\Lambda_\Omega = \Lambda_\sigma^n$. From this one can deduce that, for example, the map  $\Lambda_\sigma\colon H^{0,q}(X) \to H^{2, q}(X) $ is injective.
\end{rem}

\begin{exam}\label{exam: kodaira fibration lambda}
Maybe somewhat suprisingly, Lemma~\ref{lem: wedge omegabar} may fail without the additional symmetry provided by complex conjugation, even if the Fr\"olicher spectral sequence degenerates at $E_1$. As an example consider a Kodaira surface as described in Section~\ref{ex: Kodaira surface}. Then $\Omega = \omega_1 \wedge \omega_2$ is a holomorphic volume form and $\omega_1$ is a generator for $H^{1,0}_{\delbar} (X)\subset H^2_{dR}(X, \IC)$, that is, a holmorphic $1$-form.
But 
\[\Lambda_{\bar \Omega} \omega_1 = \omega_1\wedge \bar \Omega = \omega_1\wedge \bar \omega_1\wedge \bar \omega_2 = (\delbar \omega_2) \wedge \bar \omega_2=\delbar( \omega_2 \wedge \bar \omega_2)\]
is trivial in Dolbeault cohomology.
\end{exam}

\begin{prop}\label{prop: contraction non-degenerate}
Let $X$ be a compact complex manifold of dimension $n$ with trivial canonical bundle and satisfying the $\del\delbar$-Lemma. Then the evaluation map
\[b\colon H^0(X, \kt_X)\times H^0(X, \Omega_X^1)\to H^0(X, \ko_X) = \IC\]
is non-degenerate.
\end{prop}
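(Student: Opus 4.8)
The plan is to fix a nowhere vanishing holomorphic $n$-form $\Omega$ — which exists since $\Omega^n_X$ is trivial — use it to trivialise the relevant line bundles, and thereby reduce the statement to the non-degeneracy of the wedge product of holomorphic forms; this last point I would then deduce from Serre duality together with Lemma~\ref{lem: wedge omegabar}.

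First I would observe that contraction with $\Omega$ is an isomorphism of sheaves $\kt_X\isom\Omega^{n-1}_X$, $v\mapsto\iota_v\Omega$, hence $H^0(X,\kt_X)\isom H^0(X,\Omega^{n-1}_X)=H^{n-1,0}_{\delbar}(X)$ on global sections. Since $\alpha\wedge\Omega=0$ for every $\alpha\in H^0(X,\Omega^1_X)$ for degree reasons, the antiderivation property of $\iota_v$ gives $\alpha\wedge\iota_v\Omega=(\iota_v\alpha)\,\Omega=b(v,\alpha)\,\Omega$. Identifying $H^0(X,\Omega^n_X)=\IC\cdot\Omega$ with $\IC$, this shows that $b$ is non-degenerate if and only if the wedge pairing
\[W\colon H^0(X,\Omega^1_X)\times H^0(X,\Omega^{n-1}_X)\longrightarrow H^0(X,\Omega^n_X)=\IC,\qquad (\alpha,\beta)\longmapsto\alpha\wedge\beta\]
is non-degenerate; as a by-product this would give $h^0(\kt_X)=h^{1,0}(X)$.

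To prove that $W$ is non-degenerate in the first variable, I would fix $0\neq\alpha\in H^0(X,\Omega^1_X)=H^{1,0}_{\delbar}(X)$ and proceed as follows. By Serre duality the pairing $H^{1,0}_{\delbar}(X)\times H^{n-1,n}_{\delbar}(X)\to\IC$, $([\eta],[\zeta])\mapsto\int_X\eta\wedge\zeta$, is perfect, so $\int_X\alpha\wedge\gamma\neq0$ for some class $[\gamma]\in H^{n-1,n}_{\delbar}(X)$. By Lemma~\ref{lem: wedge omegabar} the map $\Lambda_{\bar\Omega}\colon H^{n-1,0}_{\delbar}(X)\to H^{n-1,n}_{\delbar}(X)$ is an isomorphism, so $[\gamma]=[\bar\Omega\wedge\beta]$ for some holomorphic $\beta\in H^0(X,\Omega^{n-1}_X)$. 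As $\alpha$, $\beta$ and $\bar\Omega$ are $\delbar$-closed, Stokes' theorem annihilates the $\delbar$-exact difference between $\gamma$ and $\bar\Omega\wedge\beta$ after wedging with $\alpha$ and integrating, so $\int_X\alpha\wedge\beta\wedge\bar\Omega\neq0$. Writing $\alpha\wedge\beta=c\,\Omega$ with $c\in\IC$, this reads $c\int_X\Omega\wedge\bar\Omega\neq0$, and since $\int_X\Omega\wedge\bar\Omega\neq0$ we get $c\neq0$, that is $W(\alpha,\beta)\neq0$. The second variable is symmetric: one uses instead the isomorphism $\Lambda_{\bar\Omega}\colon H^{1,0}_{\delbar}(X)\to H^{1,n}_{\delbar}(X)$ of Lemma~\ref{lem: wedge omegabar} and the Serre duality pairing between $H^{n-1,0}_{\delbar}(X)$ and $H^{1,n}_{\delbar}(X)$.

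I expect the crux to be the step invoking Lemma~\ref{lem: wedge omegabar}, which is also the only place where the $\del\delbar$-Lemma enters: the content is exactly that, under the $\del\delbar$-Lemma, the Serre-dual of a holomorphic $1$-form (resp.\ $(n-1)$-form) can always be represented as $\bar\Omega$ wedged with a holomorphic $(n-1)$-form (resp.\ $1$-form). Everything else — the bidegree bookkeeping, the antiderivation identity for $\iota_v$, Stokes' theorem, and the non-vanishing of $\int_X\Omega\wedge\bar\Omega$ — is routine.
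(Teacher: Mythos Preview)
Your proof is correct and follows essentially the same approach as the paper: both reduce $b$ to Serre duality via the isomorphism $\Lambda_{\bar\Omega}$ from Lemma~\ref{lem: wedge omegabar}, the only difference being that the paper packages the argument into a single commutative diagram (mapping $H^0(\kt_X)\times H^0(\Omega^1_X)$ to $H^0(\kt_X\otimes\Omega^n_X)\times H^n(\Omega^1_X)$ via $(-\otimes\Omega)\times\Lambda_{\bar\Omega}$), whereas you first pass to the wedge pairing on $H^0(\Omega^1_X)\times H^0(\Omega^{n-1}_X)$ and then chase elements through the same isomorphisms.
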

\begin{proof}
Let $\Omega$ be a trivialising section of $\Omega_X^n$ such that $\int_X \bar \Omega\wedge \Omega=1$. By Lemma~\ref{lem: wedge omegabar} (and the first part of the proof) the vertical arrows in the diagram
\[
\begin{tikzcd}
 H^0(X, \kt_X)\times H^0(X, \Omega^1_X) \dar{(-\tensor \Omega) \times \Lambda_{\bar\Omega}} \rar{b} & H^0(X, \ko_X)\dar{\Lambda_\Omega\Lambda_{\bar\Omega}}\rar[equal] & \IC \dar[equal]\\
 H^0(X, \kt_X\tensor \Omega_X^n)\times H^n(X, \Omega_X^1) \rar{\text{S.D.}} & H^n(X, \Omega_X^n)\rar{\int_X } & \IC
\end{tikzcd}
\]
are isomorphisms. To make the diagram commute, the pairing in the second row has to be evaluation on the bundle part and wedge on the form part, followed by integration over $X$. This is exactly the definition of the Serre-Duality pairing (see e.g.\ \cite[Ch.~4.1]{Huybrechts}), which is non-degenerate. Hence, our claim follows.
\end{proof}

\begin{rem}\label{rem: Albanese}
Recall from \cite{Ueno74} that for a compact complex manifold the Albanese torus is defined as 
\[\Alb(X) = H^0(X, d\ko_X)^*/H, \]
where $H^0(X, d\ko_X)\subset H^1_{\text{dR}}(X, \IC)$ is the space of closed holomorphic $1$-forms and $H$ is the closure of the image of $H_1(X, \IZ)$ in $H^0(X, d\ko_X)^*$. Fixing a base point in $X$, integration over paths gives the Albanese map $\alpha\colon X\to \Alb (X)$, which is universal for pointed maps to complex tori.

If $X$ satisfies the $\del\delbar$-Lemma, then every holomorphic form is automatically closed and the image of $H_1(X, \IZ)$ in $H^0(X, \Omega^1_X)^*$ is indeed a cocompact lattice, so that $\Alb(X)$ is a complex torus of dimension $h^1(X, \ko_X)=\frac 12 b_1(X)$. 
\end{rem}

\begin{thm}\label{thm: Albanese}
Let $X$ be a compact complex manifold with trivial canonical bundle and satisfying the $\del\delbar$-Lemma. Let $G = \Aut_{\ko}(X)_0$ be the connected component of the identity of the holomorphic automorphism group of $X$ and $\alpha \colon X \to \Alb(X)$ the Albanese map with respect to a base-point $x_0$. Then the following hold:
\begin{enumerate}
\item The map 
\[\phi\colon  G\to \Alb(X), \quad g\mapsto \alpha(g\cdot x_0)\]
is a holomorphic covering map of complex Lie groups. In particular no holomorphic vector field on $X$ has zeros and the stabiliser of any point in $X$ is discrete in $G$.
\item The Albanese map is a surjective holomorphic submersion with connected fibres.
\item  Every fibre is a compact complex manifold with trivial canonical bundle.
\end{enumerate}
\end{thm}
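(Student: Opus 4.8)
The plan is to use Proposition~\ref{prop: contraction non-degenerate} as the engine: it tells us that the pairing between holomorphic vector fields and holomorphic $1$-forms is non-degenerate, which immediately says $h^0(X,\kt_X) = h^0(X,\Omega^1_X) = \frac12 b_1(X) = \dim \Alb(X)$. First I would analyse the differential of $\phi$ at the identity. The Lie algebra of $G$ is $H^0(X,\kt_X)$, and the differential $d\phi_e$ sends a holomorphic vector field $v$ to the tangent vector at $\alpha(x_0)$ obtained by evaluating $v$ against the (closed) holomorphic $1$-forms, i.e.\ the pairing $v \mapsto b(v, -) \in H^0(X,\Omega^1_X)^* \cong T_{\alpha(x_0)}\Alb(X)$. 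By Proposition~\ref{prop: contraction non-degenerate} this is an isomorphism of vector spaces, so $\phi$ is a local biholomorphism at $e$; since it is a homomorphism of connected complex Lie groups (equivariance of the Albanese map under $\Aut_\ko(X)_0$ makes $\phi$ a homomorphism once one checks the translation action on $\Alb(X)$ matches), it is a local biholomorphism everywhere, hence a covering map onto its image, which is an open subgroup, hence all of $\Alb(X)$ since $\Alb(X)$ is connected. Because $d\phi_e$ is injective, no nonzero holomorphic vector field can fix $x_0$ to first order; running this at every point (every point is a valid base point) shows no holomorphic vector field vanishes anywhere, and the stabiliser of any point, being a closed subgroup with trivial Lie algebra, is discrete. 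This proves (1).

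For (2), the key point is that $G$ acts on $X$ and $\phi = \alpha|_{G\cdot x_0}$ composed with the orbit map $G \to G\cdot x_0$. Since $\phi$ is a submersion (indeed a local biholomorphism) and factors through $\alpha$, the differential $d\alpha_{x_0}$ must be surjective; again by homogeneity of the argument — translating by elements of $G$ — one gets that $d\alpha$ is surjective at every point of the $G$-orbit of $x_0$. The subtlety is that $X$ need not be homogeneous, so I cannot immediately conclude $d\alpha$ is surjective everywhere. Here I would use that surjectivity of $d\alpha$ is an open condition, that $\alpha$ is $G$-equivariant, and a dimension/degeneration argument: the locus where $d\alpha$ drops rank is a proper analytic subset invariant under $G$; combined with the fibre structure below and properness of $\alpha$, one forces this locus to be empty. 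Alternatively — and this is probably cleaner — one observes that $\alpha_* \colon H^0(X,\Omega^1_X) \to$ (pullback of forms on $\Alb(X)$) is by construction an isomorphism, so the pulled-back $1$-forms span a rank-$(\dim\Alb X)$ subbundle of $\Omega^1_X$ at every point where they are linearly independent; but a holomorphic $1$-form on $X$ that vanishes at a point of a fibre, together with Lemma~\ref{lem: wedge omegabar} applied with $\Omega$ the trivialising $n$-form, can be shown to vanish identically — this is the non-vanishing of "Albanese forms" that mirrors the vector field statement in (1). Surjectivity of $\alpha$ then follows since a submersion from a compact manifold is open and proper, hence surjective onto the connected $\Alb(X)$; connectedness of the fibres follows from Stein factorisation together with the universal property of the Albanese (a non-trivial finite factor would produce a larger torus, contradicting universality), or directly from the fact that $\alpha$ induces an isomorphism on $H^1$.

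For (3), let $F = \alpha^{-1}(t)$ be a fibre. It is a compact complex manifold by (2). To see $K_F$ is trivial, use the adjunction-type sequence: since $\alpha$ is a submersion, there is an exact sequence $0 \to \alpha^*\Omega^1_{\Alb(X)} \to \Omega^1_X \to \Omega^1_{X/\Alb(X)} \to 0$ of vector bundles, and restricting to $F$ and taking top exterior powers gives $K_X|_F \cong \alpha^*K_{\Alb(X)}|_F \otimes K_F = \ko_F \otimes K_F = K_F$, using that $K_{\Alb(X)}$ is trivial (a torus) and $K_X$ is trivial by hypothesis. Hence $K_F \cong K_X|_F \cong \ko_F$. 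The only thing to be careful about is that $\Omega^1_{X/\Alb(X)}|_F$ is genuinely the cotangent bundle of $F$, which holds precisely because $\alpha$ is a submersion. This completes the proof; the main obstacle is the passage in (2) from "$d\alpha$ surjective on a $G$-orbit" to "$d\alpha$ surjective everywhere", for which I would lean on the non-vanishing of Albanese $1$-forms proved via Lemma~\ref{lem: wedge omegabar}, exactly parallel to the vector-field non-vanishing in part (1).
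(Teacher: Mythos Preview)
Your proof is essentially correct and follows the same strategy as the paper: Proposition~\ref{prop: contraction non-degenerate} drives everything, and your treatment of (1), (3), and the Stein-factorisation argument for connected fibres matches the paper.

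The one place you overcomplicate is the ``main obstacle'' you flag in (2), and you already have the fix in hand: the base-point trick you invoked in (1). For any $x\in X$, take $x$ as the base point; the Albanese map changes only by a translation, so $d\alpha$ is unchanged, and the differential of the new $\phi$ at $e$ is still the pairing $b$ (the constant $b(v,\omega)$ equals the value of the holomorphic function $\omega(v)$ at \emph{any} point, in particular at $x$). Since $b$ is non-degenerate and the two spaces have the same dimension, this differential is an isomorphism, so $d\alpha_x$ is surjective. Equivalently: if some nonzero $\omega\in H^0(X,\Omega^1_X)$ vanished at $x$, then $b(v,\omega)=\omega_x(v(x))=0$ for every $v$, contradicting Proposition~\ref{prop: contraction non-degenerate}. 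This is exactly the ``non-vanishing of Albanese $1$-forms'' you are after, but the relevant input is Proposition~\ref{prop: contraction non-degenerate}, not Lemma~\ref{lem: wedge omegabar} directly. The paper does precisely this: it observes that changing the base point only translates $\alpha$ and then reuses the differential computation~\eqref{eq: differential} at every point. Your proposed alternatives (a $G$-invariant degeneracy locus, or a direct appeal to Lemma~\ref{lem: wedge omegabar}) are detours around an argument you have already made.
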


\begin{rem}
If in Theorem \ref{thm: Albanese} $X$ is in Fujiki's class $\kc$, then the fibre of the Albanese map is also in class $\kc$, and hence satisfies the $\del\delbar$-Lemma.

Does this hold true if the total space $X$ is not in class $\kc$ but only satisfies the $\del\delbar$-Lemma? 

If the answer to this question is positive, then the fibre $F$ of the Albanese map would also satify the assumptions of Theorem \ref{thm: Albanese} and, therefore, we could inductively understand the structure of $X$.
\end{rem}

\begin{proof}[Proof of Theorem~\ref{thm: Albanese}]
It is well known that $G$ is a complex Lie group \cite{Bochner-Montgomery47}.
The first item follows immediately from Proposition~\ref{prop: contraction non-degenerate}, because the differential of the group homomorphism $\phi$,
\begin{equation}\label{eq: differential}
D_{\id}\phi \colon \gothg = (H^0(X, \kt_X), [ -, - ] ) \to H^0(X, \Omega_X^1)^*,
\end{equation}
is the map induced from the evaluation pairing $b$.

Since the orbit $G\cdot x_0$ maps surjectively onto $\Alb(X)$, so does $X$ and $\alpha$ is surjective. 

As changing the base point changes the Albanese map only by a translation in $\Alb(X)$, \eqref{eq: differential} also implies that $\alpha$ is a submersion in every point and hence every fibre $F$ is smooth with trivial canonical bundle $K_F = {K_X}_{|_F}$.

It remains to prove that $\alpha$ has connected fibres. For this consider the Stein factorisation 
\[\begin{tikzcd}
 X \rar{\text{conn.}}[swap]{\text{fibres}} \arrow{dr}[swap]{\alpha} & Y\dar{\text{finite}}[swap]{\beta}\\ & \Alb(X).
\end{tikzcd}\]
Because $\alpha$ is a submersion, the finite map $\beta$ is a submersion, that is, $Y$ is a complex torus as well and $\beta$ is an isogeny. By the universal property of the Albanese map, $\deg \beta = 1$ and $\alpha$ has connected fibres.
\end{proof}

\begin{rem}\label{rem: albanese Kaehler}
If in Theorem~\ref{thm: Albanese} the manifold $X$ is K\"ahler, then, for example by the Beauville--Bogomolov decomposition theorem \cite{bea}, the orbits of $G$ are compact and the map $G\to \Alb(X)$ is an isogeny of complex tori, so that after a finite cover $X$ splits as a product of a complex torus and a simply connected manifold.
 
The example of the deformation of the Nakamura manifold shows \cite{CattaneoTomassini16} that this conclusion does in general not hold without the K\"ahler assumption (cf.\ also Example~\ref{ex: nakamura} and \ref{ex: open orbit}).
\end{rem}

Theorem \ref{thm: Albanese} immediately implies:

\begin{cor}
Let $X$ be a compact complex manifold with trivial canonical bundle and satisfying the $\del\delbar$-Lemma. Then the Albanese map $\alpha\colon X\to \Alb(X)$ induces an injection 
\[\alpha^*\colon H^*(\Alb(X), \IC) \isom \Wedge^*H^1(X, \IC) \into H^*(X, \IC).\]
\end{cor}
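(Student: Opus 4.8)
The plan is to deduce everything from the two facts established above: that the Albanese map $\alpha\colon X\to\Alb(X)$ is a surjective submersion with connected fibres (Theorem~\ref{thm: Albanese}), and that on both $X$ and $\Alb(X)$ the cohomology of the structure sheaf sits inside de Rham cohomology because the $\del\delbar$-Lemma holds. First I would establish the isomorphism $H^*(\Alb(X),\IC)\isom\Wedge^* H^1(X,\IC)$ appearing in the statement: since $\Alb(X)$ is a complex torus of dimension $h^1(X,\ko_X)=\frac12 b_1(X)$ (Remark~\ref{rem: Albanese}), its real cohomology is the exterior algebra on $H^1(\Alb(X),\IC)$, and the pullback $\alpha^*\colon H^1(\Alb(X),\IC)\to H^1(X,\IC)$ is an isomorphism because both sides are identified, via the $\del\delbar$-decomposition, with $H^0(X,\Omega^1_X)\oplus\overline{H^0(X,\Omega^1_X)}=H^0(X,d\ko_X)\oplus\overline{H^0(X,d\ko_X)}$, and this is exactly how $\Alb(X)$ is built. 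So $\alpha^*$ restricted to degree one is already an isomorphism, and it extends multiplicatively to the claimed isomorphism $H^*(\Alb(X),\IC)\isom\Wedge^*H^1(X,\IC)$ of graded algebras.

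The remaining content is that the composite $\alpha^*\colon \Wedge^*H^1(X,\IC)\to H^*(X,\IC)$ is injective. Here I would use that $\alpha$ is a surjective submersion with connected (hence nonempty, compact) fibres, and apply the projection formula: for a class $\gamma\in H^*(\Alb(X),\IC)$, one has $\alpha_*(\alpha^*\gamma\cup\eta)=\gamma\cup\alpha_*\eta$ for any $\eta\in H^*(X,\IC)$, where $\alpha_*$ denotes integration along the fibre (well-defined as $\alpha$ is a proper submersion, so $X\to\Alb(X)$ is a fibre bundle in the $C^\infty$ category by Ehresmann). Taking $\eta=1$, the fibre integral $\alpha_*1\in H^0(\Alb(X),\IC)$ equals the Euler characteristic—actually for a fibre bundle of the relevant parity one needs a slight care: better to pair against the fundamental class directly. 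Concretely, given a nonzero $\gamma\in H^k(\Alb(X),\IC)$ with $k\le 2n$, by Poincaré duality on the torus pick $\gamma'\in H^{2n-k}(\Alb(X),\IC)$ with $\int_{\Alb(X)}\gamma\cup\gamma'\ne 0$; then $\int_X \alpha^*\gamma\cup\alpha^*\gamma'\cup [F]^{\vee}$, where I insert a class restricting to a generator on the fibre, computes (via the projection formula and $\int_X\alpha^*\gamma\cup\alpha^*\gamma' \cdot(\text{fibre class})=\int_{\Alb(X)}\gamma\cup\gamma'$) a nonzero number, so $\alpha^*\gamma\ne 0$.

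Cleaner still, and the version I would actually write: since $\alpha$ is a proper surjective submersion with connected fibres of real dimension $2m$ where $m=\dim_\IC F$, integration over the fibre gives $\alpha_*\colon H^{*+2m}(X,\IC)\to H^*(\Alb(X),\IC)$ with $\alpha_*\alpha^*=\mathrm{id}$ up to multiplication by $\int_F 1$—which fails when $F$ has odd Euler characteristic contribution, so instead I use that there exists $\theta\in H^{2m}(X,\IC)$ with $\alpha_*\theta=1$ (take $\theta$ Poincaré dual to a point of a fibre, or note $F$ has trivial canonical bundle so $[F]$ pairs nontrivially with a Kähler-type class when available; in general a point class works). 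Then for any nonzero $\gamma$, $\alpha_*(\alpha^*\gamma\cup\theta)=\gamma\cup\alpha_*\theta=\gamma\ne 0$, forcing $\alpha^*\gamma\ne 0$. The main obstacle is purely bookkeeping: making the fibre-integration map $\alpha_*$ and the projection formula rigorous in this possibly non-Kähler, non-projective setting. This is handled by Ehresmann's theorem—$\alpha$ is a locally trivial $C^\infty$ fibre bundle since it is a proper submersion—so $\alpha_*$ is the classical Gysin/integration-over-the-fibre map and the projection formula is standard; once that is in place, injectivity is immediate.
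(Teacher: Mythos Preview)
The paper offers no proof beyond the assertion that the corollary follows immediately from Theorem~\ref{thm: Albanese}, so there is little to compare against. Your overall strategy---reduce to finding $\theta\in H^{2m}(X,\IC)$ with $\alpha_*\theta=1$ and then apply the projection formula $\alpha_*(\alpha^*\gamma\cup\theta)=\gamma$---is sound in principle, but the construction of $\theta$ is where the argument breaks down. Neither of your candidates works: the Poincar\'e dual in $X$ of a point lies in $H^{2n}(X)$, not $H^{2m}(X)$, and there is no ``point class'' in the required degree. More fundamentally, the existence of such a $\theta$ is equivalent to surjectivity of the restriction $H^{2m}(X)\to H^{2m}(F)\cong\IC$, and this \emph{fails} for general proper submersions with connected fibres: in the Hopf fibration $S^1\hookrightarrow S^3\to S^2$ one has $H^1(S^3)=0$, so no $\theta$ exists and indeed the pullback $H^2(S^2)\to H^2(S^3)=0$ is not injective. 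Your final paragraph treats the existence of $\theta$ as a formality once Ehresmann supplies the fibre-bundle structure; the Hopf example shows it is not.

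To close the gap you must actually use the hypotheses---trivial $K_X$, the $\del\delbar$-Lemma, or the fact that $\alpha$ is the Albanese map (so $\alpha^*$ is already an isomorphism on $H^1$). Equivalently, what has to be shown is that $\alpha^*[\mathrm{vol}_{\Alb(X)}]\neq 0$ in $H^{2q}(X)$. Since $\alpha$ is a submersion, the pulled-back holomorphic volume form $\eta:=\alpha^*(\phi_1\wedge\cdots\wedge\phi_q)$ is nowhere vanishing, hence $0\neq[\eta]\in H^{q,0}(X)$; but the step from this to $[\eta\wedge\bar\eta]\neq 0$ in $H^{q,q}(X)$ is exactly the missing content, and it is here that the special structure of $X$ (for instance via Lemma~\ref{lem: wedge omegabar}) should enter.
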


We deduce as in \cite[Thm.\ 21]{Kawamata81}:

\begin{cor}\label{cor: q(X) leq dim X}
Let $X$ be a compact complex manifold with trivial canonical bundle and satisfying the $\del\delbar$-Lemma. Then $b_1(X) = 2 h^0(X, \Omega_X^1)\leq 2\dim X$ and equality holds if and only if $X$ is a complex torus.  
\end{cor}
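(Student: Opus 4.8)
The plan is to extract everything from Theorem~\ref{thm: Albanese} together with the Hodge-type decomposition afforded by the $\del\delbar$-Lemma, exactly as in Kawamata's argument. First I would record the identity $b_1(X) = 2h^0(X,\Omega_X^1)$: the $\del\delbar$-Lemma gives $H^1_{dR}(X,\IC) = H^{1,0}_{\delbar}(X)\oplus H^{0,1}_{\delbar}(X)$ with $H^{0,1}_{\delbar}(X) = \overline{H^{1,0}_{\delbar}(X)}$, and since every holomorphic $1$-form is automatically $d$-closed (Remark~\ref{rem: Albanese}) one has $H^{1,0}_{\delbar}(X)\cong H^0(X,\Omega_X^1)$; hence $b_1(X) = 2h^{1,0}(X) = 2h^0(X,\Omega_X^1)$. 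This is also consistent with $\dim_\IC \Alb(X) = h^1(X,\ko_X) = \tfrac12 b_1(X) = h^0(X,\Omega_X^1)$ as in Remark~\ref{rem: Albanese}.

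Next I would invoke Theorem~\ref{thm: Albanese}(2): the Albanese map $\alpha\colon X\to\Alb(X)$ is a surjective holomorphic submersion, so for any $x\in X$ the differential $d\alpha_x\colon T_xX\to T_{\alpha(x)}\Alb(X)$ is surjective. This forces $\dim X\geq\dim\Alb(X) = h^0(X,\Omega_X^1)$, i.e.\ $b_1(X) = 2h^0(X,\Omega_X^1)\leq 2\dim X$, which is the stated inequality.

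For the equality statement, one implication is immediate: a complex torus of dimension $n$ satisfies $h^0(\Omega^1) = n = \dim$. Conversely, if $h^0(X,\Omega_X^1) = \dim X$, then $\alpha$ is a surjective submersion with connected fibres (Theorem~\ref{thm: Albanese}(2)) between compact complex manifolds of the same dimension; its fibres are therefore connected compact complex manifolds of dimension $0$, hence single points, so $\alpha$ is a bijective holomorphic submersion and thus a biholomorphism, whence $X\cong\Alb(X)$ is a complex torus. (Equivalently, in the Stein factorisation used in the proof of Theorem~\ref{thm: Albanese} the finite map $\beta$ already has degree $1$, so that $\dim X = \dim\Alb(X)$ directly forces $\alpha$ to be an isomorphism.)

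Since Theorem~\ref{thm: Albanese} — resting on the non-degeneracy of the evaluation pairing in Proposition~\ref{prop: contraction non-degenerate} — does all of the geometric work, there is essentially no remaining obstacle here. The only point demanding a line of care is the bookkeeping $\dim\Alb(X) = h^0(X,\Omega_X^1)$, which relies on the conjugation symmetry $H^{0,1}_{\delbar}(X) = \overline{H^{1,0}_{\delbar}(X)}$ of Dolbeault cohomology guaranteed by the $\del\delbar$-Lemma and was already observed in Remark~\ref{rem: Albanese}.
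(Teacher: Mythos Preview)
Your proof is correct and follows essentially the same approach as the paper: surjectivity of the Albanese map from Theorem~\ref{thm: Albanese} gives the inequality, and in the equality case the connected fibres of the submersion $\alpha$ are points, so $\alpha$ is an isomorphism. You are simply more explicit about the identity $b_1(X)=2h^0(X,\Omega_X^1)$ and the trivial converse direction, both of which the paper leaves implicit.
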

\begin{proof}
By Theorem~\ref{thm: Albanese}, the Albanese map is surjective; hence, the inequality follows. In case of equality, $\alpha$ is a finite holomorphic submersion of degree $1$, i.e., an isomorphism. 
\end{proof}

\begin{cor}
If $b_1(X)>0$, then the topological Euler-characteristic vanishes.
\end{cor}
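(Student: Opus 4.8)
The plan is to produce a continuous vector field on $X$ with no zeros and then invoke the Poincar\'e--Hopf theorem. First I would note that the hypothesis $b_1(X)>0$ forces $h^0(X,\Omega_X^1)>0$: by Remark~\ref{rem: Albanese} (or Corollary~\ref{cor: q(X) leq dim X}) we have $\dim_{\IC}\Alb(X)=h^0(X,\Omega^1_X)=\tfrac12 b_1(X)>0$. By Proposition~\ref{prop: contraction non-degenerate} the evaluation pairing $H^0(X,\kt_X)\times H^0(X,\Omega^1_X)\to\IC$ is non-degenerate, so $h^0(X,\kt_X)=h^0(X,\Omega^1_X)>0$ and $X$ carries a nonzero holomorphic vector field $v$. (Equivalently, $\phi\colon G\to\Alb(X)$ from Theorem~\ref{thm: Albanese} is a covering, so $\dim G=\dim\Alb(X)>0$, and $v$ may be taken to be a nonzero element of the Lie algebra $\gothg=H^0(X,\kt_X)$.)

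Next I would invoke Theorem~\ref{thm: Albanese}(1), which states precisely that no holomorphic vector field on $X$ has zeros; in particular $v$ is nowhere vanishing. Viewing $v$ as a nowhere-vanishing smooth section of the real tangent bundle of the compact manifold $X$ and applying the Poincar\'e--Hopf index theorem, the sum of local indices is $0$, hence $\chi_{\mathrm{top}}(X)=0$.

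There is no serious obstacle here: the substantial content is already contained in Proposition~\ref{prop: contraction non-degenerate} and Theorem~\ref{thm: Albanese}. The only point requiring care is the implication ``$b_1(X)>0\ \Rightarrow$ there exists a nonzero holomorphic vector field'', which is exactly the non-degeneracy of the contraction pairing together with the identification $\tfrac12 b_1(X)=h^0(X,\Omega^1_X)$; everything else is the classical Poincar\'e--Hopf argument.
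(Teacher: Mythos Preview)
Your proof is correct, but it follows a different route from the paper's. The paper argues via the Albanese map itself: by Theorem~\ref{thm: Albanese}(2) the map $\alpha\colon X\to\Alb(X)$ is a proper holomorphic submersion, hence (by Ehresmann) a smooth fibre bundle with typical fibre $F$; multiplicativity of the topological Euler characteristic in fibre bundles then gives $\chi(X)=\chi(F)\cdot\chi(\Alb(X))=0$, since $\Alb(X)$ is a positive-dimensional torus. You instead use Theorem~\ref{thm: Albanese}(1) to obtain a nowhere-vanishing holomorphic vector field and conclude by Poincar\'e--Hopf. Your argument is a touch more elementary (it avoids Ehresmann and multiplicativity), while the paper's argument makes visible the fibre-bundle structure of $X$ over its Albanese; both are short and rest on the same underlying Theorem~\ref{thm: Albanese}.
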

\begin{proof}
Differentiably, $X\to \Alb(X)$ is a locally trivial fibre bundle with typical fibre $F$ by Theorem~\ref{thm: Albanese}. Since the topological Euler number is multiplicative in fibre bundles, the result follows as soon as $\frac 12 b_1(X)  =  \dim \Alb(X)>0$.
\end{proof}
 
\begin{exam}[{cf.\ \cite[Ex.\ 4.2]{CattaneoTomassini16}}]\label{ex: nakamura}
Let $N = (\Gamma' \ltimes \Gamma'') \backslash (\IC \ltimes \IC^2)$ be the complex parallelisable Nakamura manifold and $T$ a $1$-dimensional complex torus. Let $X = N \times T$ and consider the deformation of $X$ defined by
\[\begin{array}{llcll}
(1, 0)\text{-forms:} & \varphi^1_t = dz^1 - t d\bar{z}^1 & \quad & (0, 1)\text{-forms:} & \omega^1_t = d\bar{z}^1 - \bar{t} dz^1\\
 & \varphi^2_t = e^{-z^1} dz^2 & \quad & & \omega^2_t = e^{-z^1} d\bar{z}^2\\
 & \varphi^3_t = e^{z^1} dz^3 & \quad & & \omega^3_t = e^{z^1} d\bar{z}^3\\
 & \varphi^4_t = dz^4 & \quad & & \omega^4_t = d\bar{z}^4.
\end{array}\]
It is then easy to see that the deformed manifold $X_t$ has the structure equations
\[\begin{array}{lcl}
d\varphi^1_t = 0 & \quad & d\omega^1_t = 0\\
d\varphi^2_t = -\frac{1}{1 - |t|^2} \varphi^1_t \wedge \varphi^2_t + \frac{t}{1 - |t|^2} \varphi^2_t \wedge \omega^1_t & \quad & d\omega^2_t = -\frac{1}{1 - |t|^2} \varphi^1_t \wedge \omega^2_t - \frac{t}{1 - |t|^2} \omega^1_t \wedge \omega^2_t\\
d\varphi^3_t = \frac{1}{1 - |t|^2} \varphi^1_t \wedge \varphi^3_t - \frac{t}{1 - |t|^2} \varphi^3_t \wedge \omega^1_t & \quad & d\omega^3_t = \frac{1}{1 - |t|^2} \varphi^1_t \wedge \omega^3_t + \frac{t}{1 - |t|^2} \omega^1_t \wedge \omega^3_t\\
d\varphi^4_t = 0 & \quad & d\omega^4_t = 0.
\end{array}\]
and that $X_t$ is a complex symplectic manifold by means of the form
\[\sigma_t = \varphi^{14}_t + \varphi^{23}_t.\]
We point out here that $X_t$ satisfies the $\del \delbar$-Lemma for every $t \neq 0$, while the central fibre does not by \cite{angella-kasuya17}.
Moreover, we have
\[H^0(X_t, \Omega^1_{X_t}) = \left\{ \begin{array}{ll}
\langle \varphi^1_0, \varphi^2_0, \varphi^3_0, \varphi^4_0 \rangle & \text{for } t = 0,\\
\langle \varphi^1_t, \varphi^4_t \rangle & \text{for } t \neq 0.
\end{array} \right.\]
We consider then the holomorphic map
\[\begin{array}{rccc}
f: & N_t \times T & \longrightarrow & (\Gamma \backslash \IC) \times T\\
   & ([(z^1_t, z^2_t, z^3_t)], [z^4_t]) & \longmapsto & ([z^1_t], [z^4_t]),
\end{array}\]
and so after the choice of a base point on $X_t$ we get a commutative diagram
\[\begin{tikzcd}
X_t \arrow{r}{f} \arrow{dr}[swap]{\alpha_t} & (\Gamma \backslash \IC) \times T\\
 & \Alb(X_t) \arrow{u}[swap]{g_t}.
\end{tikzcd}\]
We observe now that for $t \neq 0$, both $(\Gamma \backslash \IC) \times T$ and $\Alb(X_t)$ are $2$-dimensional complex tori, so $g_t$ is an isogeny and $\alpha_t^{-1}(p) = f^{-1}(g_t(p))$ for every $p \in \Alb(X_t)$. The fibres of $f$ are easy to describe: from the structure equations above we can easily see that they are all $2$-dimensional complex tori.

We want also to observe that the central fibre has trivial canonical bundle, does not satisfy the $\del \delbar$-Lemma and has $2h^{1,0}(X_0) =\dim X =  4$, but $X$ is not a complex torus. In fact, we have $H^0(X, d\ko_X) = \langle \phi_0^1,  \phi_0^4\rangle$ and the Albanese torus is only of dimension $2$.  So this example shows that Corollary~\ref{cor: q(X) leq dim X} is false without assuming the $\del \delbar$-Lemma. 
\end{exam}

\begin{exam}\label{ex: open orbit}

Here we consider again small deformations $N_t$ $(t\neq 0)$ of the Nakamura threefold to see Theorem \ref{thm: Albanese} in action and point out some differences to the K\"ahler case.

Following \cite[p.\ 98, case 1]{nakamura75} we can describe $N_t$ as the quotient of a solvable real Lie group $\tilde N$, which as a complex manifold is $\tilde N \isom \IC^3 = \IC \times \IC^2$, by the following action of a lattice $\Gamma = \Gamma' \times \Gamma''$, where $\Gamma' = \IZ \oplus \IZ \cdot 2\pi i$:
\[\begin{array}{rccc}
p_t(\omega_1, \omega_2, \omega_3): & \IC^3 & \longrightarrow & \IC^3\\
 & (z^1, z^2, z^3) & \longmapsto & (z^1 + \omega_1 + t \bar{\omega}_1, e^{\omega_1}z^2 + \omega_2, e^{-\omega_1}z^3 + \omega_3).
\end{array}\]
For small values of $t$, the map $\gamma_t: \Gamma' \longrightarrow \IC$ defined as $\gamma_t(\omega_1) = \omega_1 + t \bar{\omega}_1$ is injective, and its image is then a lattice isomorphic to $\Gamma'$.

Consider the action on $\tilde N$ given by 
\[ H\isom \IC \into \Aut_\ko(\tilde N), \quad c \mapsto \tau_c\]
where $\tau_c([(z^1, z^2, z^3)]) = [z^1 + c, z^2, z^3]$. 
Note that the action of $H$ descends to a holomorphic action on $N_t$, and since $\dim \Aut_\ko(N_t) = h^0(X, \kt_X) = h^{1,0}(X)=1$ by Proposition \ref{prop: contraction non-degenerate} we see that $H$ is the universal cover of the identity component of the holomorphic automorphism group of $N_t$. 

Let $x_0\in N_t$ be the image of the identity elemente $0\in \tilde N$. Then clearly the orbit $H\cdot 0\subset \tilde N$ is isomorphic to $\IC$ and it is easy to check that $H\cdot 0\cap \Gamma\isom \IZ$. Therefore the orbit $\Aut_\ko(N_t)_0 \cdot x_0$ is isomorphic to $\IC^*$ and not closed in $N_t$, because $N_t$ is compact.

In particular, the action of $\Aut_\ko(N_t)$ induces a holomorphic foliation without closed leaves on $N_t$ and the map $\Aut_\ko(N_t)\cdot x_0 \to \Alb(N_t)$ has infinite degree. This illustrates Theorem \ref{thm: Albanese} for the manifolds $N_t$, and also highlights the differences to the K\"ahler case, compare Remark \ref{rem: albanese Kaehler}.
\end{exam}
\begin{quest}
Can one extend, at least partially, the above results to a larger class of manifolds, assuming, for example, that $c_1(X)=0$ in $H^{1,1}_{BC}(X)$, advocated in \cite{Tosatti15}, or even only that the Kodaira dimension $\kappa(X)=0$?
\end{quest}

\section{Unobstructed deformations and stability of properties}\label{sect: unobstructed defs}
Eventually, we want to study deformations of $\del\delbar$-complex symplectic manifolds, but the first results go through under the weaker assumption that the $\del\delbar$-Lemma holds and the canonical bundle is trivial, so we work in this setting.

\subsection{Stability of properties}
To have a sensible theory we need to show that the property of being $\del\delbar$-symplectic is open in the universal family of deformations. Again, this also works in the more general setting where we just assume that the canonical bundle is trivial. In the hyperk\"ahler setting this was done in \cite[Prop.\ 9 (p.\ 771)]{bea}.

\begin{prop}\label{prop: stability-under-defs}
Let $X$ be a compact complex manifold and let $f\colon \kx\to B$ be a small deformation of $X = \kx_0$.
If $X$ satisfies the $\del\delbar$-Lemma (or if the Fr\"olicher spectral sequence for $X$ degenerates on the first page), then every sufficiently close neighbouring fibre satisfies the $\del\delbar$-Lemma (or its Fr\"olicher spectral sequence degenerates on the first page, respectively,) as well.
Assuming either of these,  
\begin{enumerate}
\item if $K_X$ is trivial, then $K_{X_t}$ is trivial for $t$ sufficiently close to $0$;
\item if $X$ admits a complex symplectic form, then $X_t$ admits a complex symplectic form for $t$ sufficiently close to $0$.
\end{enumerate}
\end{prop}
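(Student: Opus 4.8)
The overall strategy is to first establish the openness of the $\del\delbar$-Lemma (and of $E_1$-degeneration of the Fr\"olicher spectral sequence) under small deformations, and then to use this together with the semicontinuity of Hodge numbers to propagate the triviality of the canonical bundle and the existence of a complex symplectic form. For the openness statement, the key input is that both properties are equivalent to a purely numerical condition: the Fr\"olicher spectral sequence degenerates at $E_1$ if and only if $\sum_{p+q=k} h^{p,q}_{\delbar}(X_t) = b_k(X_t)$ for all $k$, and $X_t$ satisfies the $\del\delbar$-Lemma if and only if, in addition, the Hodge numbers satisfy the symmetry $h^{p,q} = h^{q,p}$ (by the characterization of the $\del\delbar$-Lemma in terms of the dimensions of Bott--Chern, Dolbeault and Aeppli cohomologies, as in \cite{Angella14} or the $F_1$/$\del\delbar$ criteria of Deligne--Griffiths--Morgan--Sullivan). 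Since $b_k(X_t)$ is locally constant in a family and the Hodge numbers $h^{p,q}_{\delbar}(X_t) = \dim H^q(X_t, \Omega^p_{X_t})$ are upper semicontinuous, the equality $\sum_{p+q=k} h^{p,q}_{\delbar}(X_t) \geq b_k(X_t)$ always holds (the Fr\"olicher inequality), so equality at $t=0$ forces equality for $t$ near $0$ and each $h^{p,q}_{\delbar}(X_t)$ is then locally constant. The conjugation symmetry persists for the same reason. This is the main substantive step, and I expect the principal obstacle to be assembling the correct cohomological characterization and the semicontinuity statement in the non-K\"ahler setting with full care; the arguments themselves are standard once the right statements are in place.

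Granting openness, for item (1) note that $h^{0}(X_t, K_{X_t}) = h^{n,0}_{\delbar}(X_t)$ is locally constant by the above, so it equals $h^{n,0}_{\delbar}(X) = 1$ for $t$ near $0$. Thus $K_{X_t}$ admits a nonzero holomorphic section $\Omega_t$. To see that $\Omega_t$ has no zeros, observe that $\Omega_t^{\otimes m}$ gives a nonzero section of $K_{X_t}^{\otimes m}$, so $h^0(X_t, K_{X_t}^{\otimes m}) \geq 1$; on the other hand, by semicontinuity $h^0(X_t, K_{X_t}^{\otimes m}) \leq h^0(X, K_X^{\otimes m}) = 1$ since $K_X$ is trivial. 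Hence the Iitaka dimension (or simply: the linear system $|K_{X_t}^{\otimes m}|$ is a single point for all $m$), which forces the vanishing locus of $\Omega_t$ to be numerically trivial; combined with $\deg$-type considerations on a curve, or more directly with the fact that a nowhere-zero section exists on the nearby fibres of a family where $h^0(K^{\otimes m})$ is constant $=1$, one concludes $\Omega_t$ is nowhere vanishing, i.e.\ $K_{X_t} \cong \ko_{X_t}$.

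For item (2), recall that $\sigma \in H^0(X, \Omega^2_X)$ is a holomorphic symplectic form, so $n = \dim X$ is even, $\sigma^{n/2}$ trivializes $K_X$, and $h^{2,0}_{\delbar}(X) = \dim H^0(X, \Omega^2_X) \geq 1$. By openness this Hodge number is locally constant, so for $t$ near $0$ we may choose $\sigma_t \in H^0(X_t, \Omega^2_{X_t})$ depending (after possibly shrinking $B$) holomorphically on $t$ and specializing to $\sigma$ at $t = 0$; this uses that $f_* \Omega^2_{\kx/B}$ is locally free of the expected rank near $0$, by Grauert's theorem applied to the locally constant $h^{2,0}$. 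It remains to check that $\sigma_t$ is non-degenerate, equivalently that $\sigma_t^{n/2}$ is a nowhere-vanishing section of $K_{X_t}$. Since $\sigma_0^{n/2} = \sigma^{n/2}$ is nowhere zero and non-vanishing of a section is an open condition on the total space of the family (the zero locus is closed, and by properness its image in $B$ is closed and does not contain $0$), $\sigma_t^{n/2}$ is nowhere zero for $t$ sufficiently close to $0$. Moreover $\sigma_t$ is automatically $d$-closed: by the $\del\delbar$-Lemma on $X_t$ every holomorphic form is closed (as recalled in Remark~\ref{rem: Albanese}), so $(X_t, \sigma_t)$ is again $\del\delbar$-complex symplectic, completing the proof.
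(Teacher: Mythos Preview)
Your argument for the $E_1$-degeneration part and for item (2) is correct and matches the paper's approach. However, there are two genuine gaps.

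First, your characterisation of the $\del\delbar$-Lemma as ``$E_1$-degeneration plus $h^{p,q}=h^{q,p}$'' is not what the references you cite actually prove, and it is not known to be correct. The DGMS criterion requires the Hodge filtration and its conjugate on $H^k_{dR}$ to be $k$-opposed, which is a condition on subspaces, not merely on their dimensions; numerical Hodge symmetry is strictly weaker. The Angella--Tomassini characterisation you mention is the equality $\sum_{p+q=k}\bigl(h^{p,q}_{BC}+h^{p,q}_A\bigr)=2b_k$, involving Bott--Chern and Aeppli numbers, not Dolbeault symmetry. The paper uses precisely this: upper semicontinuity of $h^{p,q}_{BC}$ and $h^{p,q}_A$ together with the Angella--Tomassini inequality forces equality to persist on nearby fibres. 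Your Dolbeault-based argument establishes openness of $E_1$-degeneration (which is fine) but not of the $\del\delbar$-Lemma.

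Second, your argument for item (1) via $h^0(K_{X_t}^{\otimes m})=1$ for all $m$ does not force $K_{X_t}$ to be trivial: the blow-up of a K3 surface at a point has $K_Y=\ko_Y(E)$ with $h^0(mK_Y)=1$ for all $m\ge 0$, yet $K_Y$ is non-trivial. The correct argument is the one you yourself give for item (2): since $h^{n,0}$ is locally constant, $f_*\Omega^n_{\kx/B}$ is locally free, so the trivialising section $\Omega$ on $X_0$ extends to the total space, and the non-vanishing locus of this extension is open and, by properness, contains all fibres near $0$. This is exactly what the paper does, treating (1) and (2) uniformly.
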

\begin{proof}
The deformation openness of the $\del\delbar$-Lemma or the $E_{1}$-degeneration of the Fr\"olicher spectral sequence are known; for the convenience of the reader we include a proof for the former, following \cite[Cor.\ 2.7]{AngellaTomassini}, the latter being similar but easier.

By loc.\ cit., on a compact complex manifold $Y$,
\[2 b_k(Y) \leq \sum_{p + q = k} h^{p, q}_{BC}(Y) + h^{p, q}_{A}(Y),\]
and equality holds for every $k$ if and only if $Y$ satisfies the $\del \delbar$-Lemma (here $h^{p, q}_{BC}(Y)$ and $h^{p, q}_A(Y)$ are the dimensions of Bott--Chern and Aeppli cohomologies, respectively).
Hence, the result follows by upper semi-continuity of these numbers: indeed, if $\mathcal{Y}_t$ is a small deformation of $Y = \mathcal{Y}_0$, then
\[2 b_k(Y) = \sum_{p + q = k} h^{p, q}_{BC}(\mathcal{Y}_0) + h^{p, q}_{A}(\mathcal{Y}_0) \geq \sum_{p + q = k} h^{p, q}_{BC}(\mathcal{Y}_t) + h^{p, q}_{A}(\mathcal{Y}_t) \geq 2 b_k(\mathcal{Y}_t) = 2 b_k(Y).\]

Then the Hodge numbers $h^{p, q}(\kx_s)$ are constant: by upper semi-continuity (\cite[Cor.\ 9.19]{voisin}) we have that $h^{p, q}(\kx_s) \leq h^{p, q}(\kx_0)$, and since
\[\sum_{p + q = k} h^{p, q}(\kx_s) = b_k(\kx_s) = b_k(\kx_0) = \sum_{p + q = k} h^{p, q}(\kx_0),\]
we deduce that $h^{p, q}(\kx_s) = h^{p, q}(\kx_0)$ for every $(p, q)$.
Thus, for all $k$ the sheaf $f_* \Omega^k_{\kx|B}$ is locally free with fibre  $H^0(\kx_t, \Omega^k_{\kx_t})$ at $t$.
Shrinking $B$ further, if necessary, we may assume that all these vector bundles are trivial, so that for every holomorphic form on the central fibre, we can choose an extension to the total space. 

To conclude the proof, it thus suffices to observe that the locus where a holomorphic $n$-form does not vanish, respectively, a holomorphic $2$-form is non-degenerate, is open and thus, by properness of the map, contains a neighbourhood of the central fibre, as claimed.
\end{proof}

\begin{rem}
The property of $K_X$ being trivial is not open in a family without assuming  $E_{1}$-degeneration of the Fr\"olicher spectral sequence, as the example in \cite{Ueno80} shows.
\end{rem}

\subsection{Unobstructed deformations}
It seems to be well known that the K\"ahler condition in the celebrated Tian--Todorov Theorem on unobstructedness of deformations of Calabi--Yau manifolds can be weakened, but it appears that in the literature this is at best stated implicitly.

For example, after minor changes, the proof in \cite[Ch.\ 6]{Huybrechts} works for manifolds satisfying the $\del\delbar$-Lemma.
Even stronger, the proof in \cite[p.\ 154]{KKP08} shows:

\begin{thm}[Generalised Tian--Todorov Theorem]\label{thm: unobstructed}
Let $X$ be a compact complex manifold with trivial canonical bundle.
If the Fr\"olicher spectral sequence for $X$ degenerates on the first page, then $X$ has unobstructed deformations.
\end{thm}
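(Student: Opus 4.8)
The plan is to adapt the classical Tian--Todorov argument, which produces a formal (hence, by Artin, convergent) solution to the Maurer--Cartan equation
\[
\delbar \varphi + \tfrac12 [\varphi, \varphi] = 0
\]
in the Kodaira--Spencer differential graded Lie algebra $A^{0,\bullet}(X, \kt_X)$, replacing the use of Hodge theory on a K\"ahler metric by the formal consequences of $E_1$-degeneration of the Fr\"olicher spectral sequence. The key structural input is the \emph{Tian--Todorov lemma}: contraction with the trivialising section $\Omega$ of $\Omega^n_X$ identifies $(A^{0,\bullet}(X,\kt_X), \delbar, [-,-])$ with $(A^{n-1,\bullet}(X), \delbar, \text{``}\del\text{-bracket''})$, and under this identification the bracket $[\varphi,\psi]$ becomes, up to sign, $\del(\varphi\lrcorner\Omega \wedge \psi\lrcorner\Omega)$ decorated appropriately, so that $[\varphi,\psi]\lrcorner\Omega = \del(\text{something})$ whenever $\varphi,\psi$ are $\delbar$-closed. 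Thus the obstruction terms always lie in $\im\del$.

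First I would set up the DGLA and recall the $T^1$-lifting / power series method: write $\varphi = \sum_{k\geq 1}\varphi_k t^k$ with $\varphi_1$ a chosen harmonic-free representative of a class in $H^{0,1}(X,\kt_X)$, and solve the recursion $\delbar\varphi_k = -\tfrac12\sum_{i+j=k}[\varphi_i,\varphi_j]$ inductively. Solvability at each stage requires the right-hand side to be $\delbar$-exact; the classical proof gets this from $\delbar\del$-closedness plus the $\del\delbar$-lemma on forms. Second, I would transport the recursion via $\lrcorner\Omega$ to $A^{n-1,\bullet}(X)$; by the Tian--Todorov identity the right-hand side becomes $\del$ of a form that is itself $\delbar$-closed by induction, i.e. it is a $\delbar$-closed element of $\im\del$. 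Third, and this is the crucial point where only $E_1$-degeneration is used: a $\delbar$-closed form lying in $\im\del$ is automatically $\delbar$-exact. This does not need the full $\del\delbar$-lemma; $E_1$-degeneration says precisely that every class killed by the $\del$ on $E_1 = H_{\delbar}$ already vanishes, equivalently $\im\del \cap \ker\delbar \subseteq \im\delbar + \im\del\delbar \subseteq \im\delbar$ in the relevant degree — more carefully, one checks that the specific obstruction form, being exact for $\del$ and closed for $\delbar$, represents a class in $E_2^{\bullet,\bullet}$ that is zero, and $E_1$-degeneration forces it to be a $\delbar$-boundary. Then transport back by $\lrcorner\Omega^{-1}$ (wedging with $\Omega$), solve for $\varphi_k$, and iterate.

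The main obstacle is making the third step airtight: spelling out exactly which graded piece of the Fr\"olicher spectral sequence controls the obstruction and verifying that $E_1$-degeneration — rather than the stronger $\del\delbar$-lemma — genuinely suffices to conclude $\delbar$-exactness of a $\delbar$-closed, $\del$-exact $(n-1,k)$-form. I expect the argument of \cite[p.\ 154]{KKP08} already isolates this, so I would largely follow their bookkeeping, being careful that their hypotheses are met here: triviality of $K_X$ gives the global $\Omega$ needed for the Tian--Todorov identification, and the Frölicher degeneration gives the cohomological vanishing. Once all $\varphi_k$ are constructed, standard elliptic estimates (as in \cite[Ch.\ 6]{Huybrechts}) show the series converges for small $t$, yielding a smooth Kuranishi family over a ball, i.e. unobstructedness; I would remark that the parameter space has dimension $h^{0,1}(X,\kt_X) = h^{n-1,1}_{\delbar}(X)$, though that identification and the universality statement belong to the next section rather than to this theorem.
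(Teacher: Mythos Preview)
Your approach is genuinely different from the paper's, and as written it contains a real gap.

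The paper does \emph{not} run the classical Tian--Todorov recursion. It follows \cite{KKP08}: one shows that the Kodaira--Spencer dg Lie algebra is homotopy abelian by embedding it (via $\lrcorner\Omega$) as a summand of the dg Lie algebra underlying the Batalin--Vilkovisky algebra $(\ka^{*,*}(X),\delbar,\del)$, and then verifying the abstract \emph{degeneration property}: for every $N\geq 1$ the cohomology $H(\ka^{*,*}(X)\otimes_{\IC}\IC[u]/(u^N),\,\delbar+u\del)$ is free over $\IC[u]/(u^N)$. This last statement is what is deduced from Fr\"olicher $E_1$-degeneration, by a short spectral-sequence comparison. Unobstructedness then follows from KKP's general Theorem~4.14, with no explicit order-by-order construction.

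Your recursion, by contrast, hinges on the implication ``$\del$-exact and $\delbar$-closed $\Rightarrow$ $\delbar$-exact'', which you claim follows from $E_1$-degeneration. It does not. Degeneration at $E_1$ means $d_1=[\del]$ is the \emph{zero map} on $H_{\delbar}$, i.e.\ $\del(\ker\delbar)\subset\im\delbar$; it does \emph{not} say that classes killed by $d_1$ vanish (that would be injectivity of $d_1$), nor that $\im\del\cap\ker\delbar\subset\im\delbar$. A concrete counterexample is the Kodaira surface of Section~\ref{ex: Kodaira surface}: its Fr\"olicher spectral sequence degenerates at $E_1$ (the Hodge and Betti numbers match), yet $\omega_1\wedge\omega_2\wedge\bar\omega_1=-\del(\omega_2\wedge\bar\omega_2)$ is $\del$-exact and $\delbar$-closed but represents a nonzero class in $H^{2,1}_{\delbar}$. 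What $E_1$-degeneration \emph{would} give you is that $\del\alpha$ is $\delbar$-exact whenever $\alpha$ is $\delbar$-closed; but in the Tian--Todorov identity the form whose $\del$ yields the obstruction is $\sum_{i+j=k}\varphi_i\lrcorner\varphi_j\lrcorner\Omega$, and this is not $\delbar$-closed once $k\geq 3$, since $\delbar\varphi_i\neq 0$ for $i\geq 2$. Moreover, even the base step---choosing $\varphi_1$ with $\del(\varphi_1\lrcorner\Omega)=0$, needed to kill the extra terms in the Tian--Todorov identity---asks for a $\del$-closed representative of a $\delbar$-class, which $E_1$-degeneration alone does not provide.

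So the classical recursion, organised as you describe, genuinely needs the $\del\delbar$-Lemma (this is why the paper remarks that the proof in \cite[Ch.~6]{Huybrechts} works under that hypothesis). To get by with only $E_1$-degeneration one must use the more abstract KKP machinery, which packages the inductive freeness statement over $\IC[u]/(u^N)$ in a way that never requires the false implication above. Your instinct to ``follow their bookkeeping'' is right, but their bookkeeping \emph{is} the homotopy-abelian/BV argument, not a refinement of the power-series method.
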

For convenience of the reader we present a rough tour through the proof by Kontsevich, Katzarkov and Pantev and provide some supplementary details where the degeneration assumption is used.
\begin{proof}[Sketch of proof.]
The following arguments show smoothness of the formal moduli space, which implies that the Kuranishi space is smooth as well; hence, deformations are unobstructed.

To show that the formal deformation space is smooth, it has to be shown that the Kodaira-Spencer dg Lie algebra is homotopy abelian (loc.\ cit.\ Definition 4.9).
This is achieved through the fact that it is a direct summand of another dg Lie algebra, so that it suffices to show that this larger one is homotopy abelian.
But by loc.\ cit.\ Theorem 4.14 (1), this holds as soon as the corresponding dg Batalin-Vilkovisky algebra satisfies the so-called degeneration property (loc.\ cit.\ Definition 4.13; cf.\ below).
This algebra happens to be isomorphic to the Dolbeault double complex $(\ka^{*,*}_{X},\delbar,\del)$.
To complete the proof, all that is left to show is that the latter satisfies the degeneration property; that is, we have to show that for each positive integer $N\geq 1$, the cohomology $H(\ka^{*,*}_{X}\otimes_{\IC}\IC[u]/(u^N),\delbar+u\del)$ is a free $\IC[u]/(u^N)$-module.

To this end, we compare the spectral sequences $(E^{\ast,\ast}_{r})'$ and $(E^{\ast,\ast}_{r})''$  associated with the double complexes $(\ka^{*,*}_{X}\otimes_{\IC}\IC[u]/(u^N),u\del,\delbar)$ and  $(\ka^{*,*}_{X}\otimes_{\IC}\IC[u]/(u^N),\del,\delbar)$, computing the cohomology of their total complexes $H(\ka^{*,*}_{X}\otimes_{\IC}\IC[u]/(u^N),\delbar+u\del)$ and $H(\ka^{*,*}_{X}\otimes_{\IC}\IC[u]/(u^N),d)$, respectively.
They have the same cohomology groups on the first page and $d_{1}' = ud_{1}''$.
Furthermore, $(E_{1})'' = E_{1}\otimes_{\IC}\IC[u]/(u^N)$ is obtained from the Fr\"olicher spectral sequence of $X$ by scalar extension.
Thus, if the Fr\"olicher spectral sequence degenerates at the first page, so do the other two; a posteriori, they even become isomorphic.
Consequently, $H(\ka^{*,*}_{X}\otimes_{\IC}\IC[u]/(u^N),\delbar+u\del)$ is isomorphic to $H(X,\IC)\otimes_{\IC}\IC[u]/(u^{N})$ through those spectral sequences and so it is free over $\IC[u]/(u^{N})$, as claimed.
\end{proof}

\begin{rem}
Without assumptions like the degeneration of the Fr\"olicher spectral sequence on the first page, the result is definitely far from true. As shown in \cite{rollenske11}, most complex parallelisable nilmanifolds have obstructed deformations, for example if they contain an abelian factor. 
\end{rem}

\begin{cor}\label{cor: universal deformation}
 Let $X$  be a compact complex manifold with trivial canonical bundle whose Fr\"olicher spectral sequence degenerates at the first page. Then the Kuranishi family of $X$ is a smooth universal deformation. 
\end{cor}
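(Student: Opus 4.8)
The plan is to upgrade the versality of the Kuranishi family to universality via the classical criterion of Wavrik, the extra input being the constancy of the dimension of the space of infinitesimal automorphisms along the family. By Theorem~\ref{thm: unobstructed} the Kuranishi space $B$ of $X$ is smooth, so the Kuranishi family $f\colon\kx\to B$ is a versal deformation of $X=\kx_0$ over a smooth --- hence reduced --- base, whose Kodaira--Spencer map at $0$ is an isomorphism onto $H^1(X,\kt_X)$. By Wavrik's theorem \cite{Wavrik}, a versal deformation over a reduced base is universal as soon as $t\mapsto h^0(\kx_t,\kt_{\kx_t})$ is constant near $0$; thus it suffices to prove this constancy.

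Here I would use Proposition~\ref{prop: stability-under-defs}: after shrinking $B$, every fibre $\kx_t$ again has trivial canonical bundle and $E_1$-degenerate Fr\"olicher spectral sequence, and --- as is shown in the course of that proof --- all Hodge numbers $h^{p,q}(\kx_t)$ are independent of $t$. Setting $n=\dim X$ and using that contraction with a (fibrewise) trivialising section of $K_{\kx_t}$ identifies $\kt_{\kx_t}$ with $\Omega^{n-1}_{\kx_t}$, we get
\[ h^i(\kx_t,\kt_{\kx_t}) = h^i(\kx_t,\Omega^{n-1}_{\kx_t}) = h^{n-1,i}(\kx_t) \qquad\text{for all }i, \]
and the right-hand side is independent of $t$. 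In particular $h^0(\kx_t,\kt_{\kx_t})=h^{n-1,0}(\kx_t)$ is constant, so Wavrik's criterion applies and $f$ is universal. Consistently with this, $h^1(\kx_t,\kt_{\kx_t})=h^{n-1,1}(\kx_t)=h^{n-1,1}(X)=\dim B$ for every $t$, as one expects for a universal family.

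The step I expect to be the main obstacle is the passage from versal to universal: one must make sure that the property actually needed is the \emph{local constancy of $h^0(\kx_t,\kt_{\kx_t})$} --- not its vanishing, which would be far too strong (complex tori already violate it) --- and that this constancy is genuinely at our disposal. This is precisely where the hypotheses enter essentially: triviality of the canonical bundle turns $\kt$ into $\Omega^{n-1}$, and Proposition~\ref{prop: stability-under-defs} delivers the constancy of the relevant Hodge numbers. Everything else is formal: smoothness of $B$ is Theorem~\ref{thm: unobstructed}, and versality of the Kuranishi family is built into its construction.
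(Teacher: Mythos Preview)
Your argument is correct and follows essentially the same route as the paper: smoothness from Theorem~\ref{thm: unobstructed}, universality from Wavrik's criterion, and constancy of $h^0(\kx_t,\kt_{\kx_t})$ obtained by combining the identification $\kt_{\kx_t}\cong\Omega^{n-1}_{\kx_t}$ (via the fibrewise trivial canonical bundle) with the constancy of Hodge numbers from Proposition~\ref{prop: stability-under-defs}. The only differences are cosmetic --- you spell out the reduced-base hypothesis in Wavrik and add the consistency check on $h^1$ --- but the substance is identical.
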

\begin{proof}
By Theorem~\ref{thm: unobstructed}, the Kuranishi family is indeed smooth.
To see that is also universal we have to check that the number of independent holomorphic vector fields remains constant in a neighbourhood of the central fibre by Wavrik's theorem \cite{Wavrik69}.
By Proposition~\ref{prop: stability-under-defs}, nearby fibres $X_t$ also have $E_{1}$-degeneration of the Fr\"olicher spectral sequence, hence, the Hodge numbers remain constant.
Thus, using the isomorphism of the tangent bundle and the bundle of holomorphic $n-1$ forms induced by a trivialisation of the canonical bundle we have, 
\[h^0(X_t, \kt_{X_t}) = h^0(X_t, \Omega^{n-1}_{X_t}) = h^0(X, \Omega^{n-1}_{X}) = h^0(X, \kt_{X}),\] 
which concludes the proof. 
\end{proof}

\section{Local Torelli for \texorpdfstring{$\del\delbar$}{del-delbar}-complex symplectic manifolds}\label{sec: local torelli}
We now study period maps of $\del \delbar$-complex symplectic manifolds, closely following Huybrechts' exposition \cite[22.3]{HuybrechtsHKM}.
Similar results were obtained (in a much more conceptual way) by Kirschner for singular symplectic spaces of Fujiki's class $\kc$ \cite{Kirschner:2015}.

\begin{thm}[Local Torelli]\label{thm: general local torelli}
Let $(X, \sigma)$ be a $\del\delbar$-complex symplectic manifold. Then the period map for the Hodge structure on $H^2(X, \IC)$, 
\[\kp_{X}\colon \Def(X)\to \Grass(h^{2,0}(X),H^{2}(X,\IC)),\quad s\mapsto [H^{2,0}(\kx_{s})],\]
is an immersion.
\end{thm}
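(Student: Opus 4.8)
The plan is to compute the derivative of $\kp_X$ and show it is injective at every point of $\Def(X)$, following Beauville's proof of local Torelli in the hyperk\"ahler case \cite[22.3]{HuybrechtsHKM}; non-degeneracy of $\sigma$ is what forces injectivity. First I would check that $\kp_X$ is a well-defined holomorphic map. By Corollary~\ref{cor: universal deformation} the Kuranishi space $\Def(X)$ is smooth, and by Proposition~\ref{prop: stability-under-defs} we may shrink it so that every fibre $\kx_s$ is again $\del\delbar$-complex symplectic; in particular $E_1$-degeneration of the Fr\"olicher spectral sequence holds fibrewise, so all Hodge numbers are constant over $\Def(X)$. Hence the sheaf of relative holomorphic $2$-forms is locally free with fibre $H^0(\kx_s,\Omega^2_{\kx_s})=H^{2,0}(\kx_s)$, and since holomorphic forms on each $\kx_s$ are $d$-closed (Remark~\ref{rem: Albanese}) the subspaces $H^{2,0}(\kx_s)\subset H^2(\kx_s,\IC)$ vary holomorphically; identifying $H^2(\kx_s,\IC)$ with $H^2(X,\IC)$ by parallel transport (legitimate since $\Def(X)$ is contractible) makes $\kp_X$ a holomorphic map to the Grassmannian. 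Being an immersion then means that $d_s\kp_X$ is injective for every $s$, and since every $\kx_s$ is again $\del\delbar$-complex symplectic and, by Corollary~\ref{cor: universal deformation}, the Kuranishi family is universal, it suffices to prove injectivity at the base point $0$.

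Next I would identify $d_0\kp_X$. Here $T_0\Def(X)=H^1(X,\kt_X)$ and $T_{[H^{2,0}(X)]}\Grass(h^{2,0}(X),H^2(X,\IC))=\operatorname{Hom}\bigl(H^{2,0}(X),H^2(X,\IC)/H^{2,0}(X)\bigr)$. The Hodge filtration $F^2\subset F^1\subset F^0=H^2$ on the cohomology of the nearby fibres is a holomorphic filtration (constant Hodge numbers) satisfying Griffiths transversality, so the classical Kodaira--Spencer computation of the derivative of a period map applies: $d_0\kp_X$ sends $v\in H^1(X,\kt_X)$ to the homomorphism taking $[\omega]\in H^0(X,\Omega^2_X)$ to the class of $\iota_v\omega\in H^1(X,\Omega^1_X)$, where $\iota_v\omega$ is the image of $v\tensor\omega$ under cup product followed by the contraction $\kt_X\tensor\Omega^2_X\to\Omega^1_X$, and where $H^1(X,\Omega^1_X)=H^{1,1}_{\delbar}(X)$ is sent into $H^2(X,\IC)/H^{2,0}(X)$ in the evident way. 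Since $X$ satisfies the $\del\delbar$-Lemma, $H^2(X,\IC)=H^{2,0}_{\delbar}(X)\oplus H^{1,1}_{\delbar}(X)\oplus H^{0,2}_{\delbar}(X)$, so $H^{1,1}_{\delbar}(X)\into H^2(X,\IC)/H^{2,0}(X)$ is injective; therefore $d_0\kp_X(v)=0$ forces $\iota_v\omega=0$ in $H^1(X,\Omega^1_X)$ for every $[\omega]\in H^{2,0}(X)$.

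Finally I would invoke $\sigma$: taking $\omega=\sigma$ and using that $\sigma$ is a holomorphic \emph{symplectic} form, the contraction $\xi\mapsto\iota_\xi\sigma$ is an isomorphism of vector bundles $\kt_X\isom\Omega^1_X$, hence induces an isomorphism $H^1(X,\kt_X)\isom H^1(X,\Omega^1_X)$ which on cohomology classes is exactly $v\mapsto\iota_v\sigma$. So $\iota_v\sigma=0$ implies $v=0$, $d_0\kp_X$ is injective, and $\kp_X$ is an immersion. (Simplicity plays no role: one only uses the single non-degenerate form $\sigma$ among all of $H^{2,0}(X)$.) The step I expect to need the most care is not this last piece of linear algebra but the preliminary one: verifying that $\kp_X$ is holomorphic and that the Kodaira--Spencer/Griffiths formula for its derivative is available outside the K\"ahler setting. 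Both rest on the fibrewise $E_1$-degeneration and the resulting constancy of Hodge numbers from Proposition~\ref{prop: stability-under-defs}, after which everything is routine.
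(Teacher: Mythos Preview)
Your proof is correct and follows essentially the same route as the paper's: both compute the differential of $\kp_X$ via Griffiths' description, evaluate at the symplectic form $\sigma$, and use that contraction with $\sigma$ gives an isomorphism $H^1(X,\kt_X)\isom H^{1,1}(X)$ to conclude injectivity. You supply more detail on the preliminaries (well-definedness and holomorphicity of $\kp_X$, reduction to the base point), which the paper leaves implicit, but the core argument is identical.
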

\begin{proof}
Let $\Def(X)$ be the universal deformation space of $X$, which exists and is smooth by Corollary~\ref{cor: universal deformation}.
We only need to show that the differential of the period map,  
\[d\kp_X\colon T_0\Def(X) = H^1(X, \kt_X) \to 
\Hom\left(H^{2,0}(X), H^2(X, \IC)/H^{2,0}(X)\right), \]
is injective, that is, for any $\kappa\in H^1(X, \kt_X)$ the homomorphism $\kp_X(\kappa)$ is non-zero. Evaluating on the symplectic form $\sigma$ we have $d\kp_X(\kappa) (\sigma) = \kappa \lrcorner \sigma $, by Giffiths' description of the derivative, and this is non-zero, because  contraction with the symplectic form $\sigma$ induces the isomorphism $ H^1(X, \kt_X)\isom H^{1,1}(X)\subset H^2(X, \IC)$.
\end{proof}

\subsection{The period map and the Beauville--Bogomolov--Fujiki form}
One of the most useful features of the Beauville--Bogomolov--Fujiki quadratic form in hyperk\"ahler geometry  is its relation to the period map.
We will now show that this extends to the case of simple $\del\delbar$-complex symplectic manifolds, but fails in the general case.
\begin{defin}\label{def: bbf-form}
Let $(X,\sigma)$ be a $\del\delbar$-complex symplectic manifold of dimension $2n$.
The \emph{Beauville--Bogomolov--Fujiki quadratic form} $q_{\sigma}\colon H^{2}(X,\IC)\to \IC$ is defined as 
$$q_{\sigma}(\alpha) = \frac{n}{2}\int_{X}(\sigma\bar{\sigma})^{n}\int_{X}\alpha^{2}(\sigma\bar{\sigma})^{n-1}+(1-n)\int_{X}\alpha\sigma^{n-1}\bar{\sigma}^{n}\int_{X}\alpha\sigma^{n}\bar{\sigma}^{n-1}.$$
\end{defin}
Quite often, it is assumed that the symplectic form is normalised in such a way that $\int_{X}(\sigma\bar{\sigma})^{n} = 1$.
This is not a restriction since rescaling by a complex scalar $t\in\IC^\times$ has the effect that $q_{t\sigma} = |t|^{4n-2}q_{\sigma}$.
(We learned the correct version of $q_{\sigma}$ for non-normalised forms from \cite{Lehn:05}.)
In particular, the quadric in $\IP(H^2(X,\IC))$ defined by $q_{\sigma}$ is independent of the normalisation.  
For a proof that this indeed defines a quadratic form and for its most important properties we refer to \cite{CattaneoTomassini16}.

We will show that for simple $\del\delbar$-complex symplectic manifolds, the image of the period map is contained in the quadric defined by the Beauville--Bogomolov--Fujiki quadratic form.
For sake of generality, we first formulate a result which still holds in the general case (cf.\ Remark~\ref{rem: period map}).

\begin{lem}\label{lem: bbf-vanishing}
Let $(X,\sigma)$ be a $2n$-dimensional $\del\delbar$-complex symplectic manifold.
For each element $\alpha\in H^2(X,\IC)$ which decomposes as $\alpha = \lambda\sigma+\alpha_{(1,1)}+\mu\bar{\sigma}$, where $\lambda,\mu\in\IC$ and $\alpha_{(1,1)}\in H^{1,1}(X)$, we have $\int_{X}(\sigma\bar{\sigma})^{n}\int_{X}\alpha^{n+1}\bar{\sigma}^{n-1} = (n+1)\lambda^{n-1}q_{\sigma}(\alpha).$
In particular, if $\lambda \not= 0$ and $\alpha^{n+1}=0$, then $q_{\sigma}(\alpha) = 0$.
\end{lem}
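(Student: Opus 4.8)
The plan is to prove the displayed identity by expanding $\alpha^{n+1}\wedge\bar\sigma^{n-1}$ multinomially, discarding every summand that vanishes for bidegree reasons, and then checking that what survives agrees term by term with $(n+1)\lambda^{n-1}q_\sigma(\alpha)$ once one expands the two integrals in Definition~\ref{def: bbf-form} in the same fashion. The only input needed is that on a compact complex manifold of complex dimension $2n$ a form of bidegree $(p,q)$ with $p>2n$ or $q>2n$ vanishes identically (so, in particular, $\sigma^{n+1}=\bar\sigma^{n+1}=0$), together with the fact --- available by the $\del\delbar$-Lemma, as recalled in the proof of Lemma~\ref{lem: wedge omegabar} --- that $H^2(X,\IC)=H^{2,0}(X)\oplus H^{1,1}(X)\oplus H^{0,2}(X)$ and that all the cup products below can be evaluated by wedging smooth representatives.

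First I would expand $\alpha^{n+1}=(\lambda\sigma+\alpha_{(1,1)}+\mu\bar\sigma)^{n+1}$ and wedge with $\bar\sigma^{n-1}$. A typical summand is a multinomial multiple of $\lambda^a\mu^c\,\sigma^a\wedge\alpha_{(1,1)}^b\wedge\bar\sigma^{c+n-1}$ with $a+b+c=n+1$; it has bidegree $(2a+b,\ b+2c+2n-2)$, whose two entries add up to $4n$, so it survives only if $2a+b=2n$ and $b+2c=2$. Together with $a+b+c=n+1$ this leaves exactly $(a,b,c)=(n-1,2,0)$ and $(a,b,c)=(n,0,1)$, with multinomial coefficients $\tfrac{n(n+1)}{2}$ and $n+1$. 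Hence
\[
\int_X\alpha^{n+1}\bar\sigma^{n-1}=(n+1)\,\lambda^{n-1}\Bigl(\tfrac n2\int_X\sigma^{n-1}\alpha_{(1,1)}^2\bar\sigma^{n-1}+\lambda\mu\int_X(\sigma\bar\sigma)^n\Bigr).
\]

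Running the same bidegree bookkeeping on the integrals appearing in $q_\sigma$ gives $\int_X\alpha^2(\sigma\bar\sigma)^{n-1}=\int_X\sigma^{n-1}\alpha_{(1,1)}^2\bar\sigma^{n-1}+2\lambda\mu\int_X(\sigma\bar\sigma)^n$, $\int_X\alpha\,\sigma^{n-1}\bar\sigma^n=\lambda\int_X(\sigma\bar\sigma)^n$ and $\int_X\alpha\,\sigma^n\bar\sigma^{n-1}=\mu\int_X(\sigma\bar\sigma)^n$. Substituting these into Definition~\ref{def: bbf-form} and using $n+(1-n)=1$ to collect the $\lambda\mu$-contributions, one obtains
\[
q_\sigma(\alpha)=\tfrac n2\int_X(\sigma\bar\sigma)^n\int_X\sigma^{n-1}\alpha_{(1,1)}^2\bar\sigma^{n-1}+\lambda\mu\Bigl(\int_X(\sigma\bar\sigma)^n\Bigr)^{2},
\]
which is precisely the parenthesis of the previous display multiplied by $\int_X(\sigma\bar\sigma)^n$. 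Multiplying that display by $\int_X(\sigma\bar\sigma)^n$ thus yields $\int_X(\sigma\bar\sigma)^n\int_X\alpha^{n+1}\bar\sigma^{n-1}=(n+1)\lambda^{n-1}q_\sigma(\alpha)$, as claimed. For the last sentence, if $\alpha^{n+1}=0$ in $H^*(X,\IC)$ then the left-hand side vanishes, and since $\lambda\ne0$ forces $\lambda^{n-1}\ne0$ (this factor is $1$ when $n=1$), we conclude $q_\sigma(\alpha)=0$.

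I do not expect a genuine obstacle; the whole argument is bidegree bookkeeping, so the only place where care is required is the combinatorics: correctly enumerating the surviving triples $(a,b,c)$, getting the coefficients $\tfrac{n(n+1)}{2}$ and $n+1$ right, and keeping the powers $\lambda^{n-1}$ versus $\lambda^n$ straight so that both sides carry the same power of $\lambda$. It is worth remarking that the main identity is in fact a polynomial identity in $\lambda$, $\mu$ and the periods of $\sigma$, hence valid with no restriction on $\lambda$; the hypothesis $\lambda\ne0$ is used only to pass from $\alpha^{n+1}=0$ to $q_\sigma(\alpha)=0$.
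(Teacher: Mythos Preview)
Your proof is correct and follows essentially the same route as the paper: both compute $q_\sigma(\alpha)$ and $\int_X\alpha^{n+1}\bar\sigma^{n-1}$ explicitly via bidegree bookkeeping on the multinomial expansion and then compare. Your write-up is in fact more careful; the paper's displayed formulae contain a typo ($\alpha_{(1,1)}$ in place of $\alpha_{(1,1)}^2$ in the surviving integrals), which your computation gets right.
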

Note that if $X$ is simple, then every element of $H^2(X,\IC)$ has such a decomposition, so that this becomes an empty condition in this case.
\begin{proof}(Cf.\ \cite[proof of Lemma 22.9]{HuybrechtsHKM})
For $\alpha$ of this particular form, it is easy to compute
$$q_{\sigma}(\alpha) = \lambda\mu\left(\int_{X}(\sigma\bar{\sigma})^n\right)^2+\frac{n}{2}\int_{X}\alpha_{(1,1)}(\sigma\bar{\sigma})^{n-1}\int_{X}(\sigma\bar{\sigma})^n$$
and
$$\int_{X}\alpha^{n+1}\bar{\sigma}^{n-1} = (n+1)\lambda^n\mu\int_{X}(\sigma\bar{\sigma})^n+\binom{n+1}{2}\lambda^{n-1}\int_{X}\alpha_{(1,1)}(\sigma\bar{\sigma})^{n-1}.$$
These readily yield the claimed identity $\int_{X}(\sigma\bar{\sigma})^{n}\int_{X}\alpha^{n+1}\bar{\sigma}^{n-1} = (n+1)\lambda^{n-1}q_{\sigma}(\alpha)$.
The \emph{in particular}-part is clear and the proof is complete.
\end{proof}

\begin{cor}\label{cor: simple bbf vanishing}
Let $(\kx,\Sigma)\to S$ be a deformation of a simple $\del\delbar$-complex symplectic manifold $(X,\sigma) = (\kx_{0},\Sigma|_{\kx_{0}})$ and denote $\sigma_{s} := \Sigma|_{\kx_{s}}\in H^{2,0}(\kx_{s})$ for each $s\in S$.
Then there exists an open neighbourhood $U\subset S$ of $0$ such that $q_{\sigma}(\sigma_{s}) = 0$ and $q_{\sigma}(\sigma_{s}+\bar{\sigma_{s}})>0$ for each $s\in U$.
\end{cor}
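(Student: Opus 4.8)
The strategy is to transfer everything to the fixed vector space $H^2(X,\IC)$. After shrinking $S$ we may assume it is contractible, so that Ehresmann's theorem together with the Gauss--Manin connection trivialises $R^2f_*\IC$; this gives real (i.e.\ conjugation-compatible) identifications $H^2(\kx_s,\IC)\isom H^2(X,\IC)$ under which $s\mapsto\sigma_s$ becomes a holomorphic map $S\to H^2(X,\IC)$ with $\sigma_0=\sigma$. Since $X$ is simple and satisfies the $\del\delbar$-Lemma, $h^{2,0}(X)=h^{0,2}(X)=1$, so the Hodge decomposition of the \emph{central} fibre reads $H^2(X,\IC)=\IC\sigma\oplus H^{1,1}(X)\oplus\IC\bar\sigma$. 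I would then write $\sigma_s=\lambda_s\sigma+(\alpha_s)_{(1,1)}+\mu_s\bar\sigma$ accordingly, the coefficients $\lambda_s,\mu_s\in\IC$ and $(\alpha_s)_{(1,1)}\in H^{1,1}(X)$ depending holomorphically on $s$ and satisfying $\lambda_0=1$, $\mu_0=0$, $(\alpha_0)_{(1,1)}=0$.

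For the vanishing $q_\sigma(\sigma_s)=0$, first shrink $S$ to a neighbourhood $U$ of $0$ on which $\lambda_s\neq0$, possible by continuity of $s\mapsto\lambda_s$. Since $\dim_\IC\kx_s=2n$, the power $\sigma_s^{n+1}$ is a holomorphic $(2n{+}2,0)$-form and therefore vanishes identically; hence $\sigma_s^{n+1}=0$ in $H^\ast(\kx_s,\IC)$. Applying Lemma~\ref{lem: bbf-vanishing} to $\alpha=\sigma_s$, which has exactly the shape required there with leading coefficient $\lambda_s\neq0$, gives $q_\sigma(\sigma_s)=0$ for every $s\in U$.

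For the positivity $q_\sigma(\sigma_s+\overline{\sigma_s})>0$, conjugate the decomposition above and use $\overline{H^{1,1}(X)}=H^{1,1}(X)$ to see that $\sigma_s+\overline{\sigma_s}=\lambda'_s\sigma+\beta_s+\overline{\lambda'_s}\,\bar\sigma$, where $\lambda'_s=\lambda_s+\overline{\mu_s}$ and $\beta_s=(\alpha_s)_{(1,1)}+\overline{(\alpha_s)_{(1,1)}}\in H^{1,1}(X)$ is conjugation-invariant. Inserting this into the formula for $q_\sigma$ on classes of this shape established in the proof of Lemma~\ref{lem: bbf-vanishing} gives
\[
q_\sigma(\sigma_s+\overline{\sigma_s})=|\lambda'_s|^2\Bigl(\int_X(\sigma\bar\sigma)^n\Bigr)^2+\frac{n}{2}\int_X\beta_s(\sigma\bar\sigma)^{n-1}\int_X(\sigma\bar\sigma)^n .
\]
This exhibits $s\mapsto q_\sigma(\sigma_s+\overline{\sigma_s})$ as a continuous, real-valued function: $\sigma^n\wedge\bar\sigma^n$ is a nowhere-vanishing conjugation-invariant top form on compact $X$, so $\int_X(\sigma\bar\sigma)^n$ is a nonzero real number, and $\int_X\beta_s(\sigma\bar\sigma)^{n-1}$ is real for the same reason. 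At $s=0$ we have $\lambda'_0=1$ and $\beta_0=0$, whence $q_\sigma(\sigma+\bar\sigma)=\bigl(\int_X(\sigma\bar\sigma)^n\bigr)^2>0$; shrinking $U$ once more keeps the value positive on all of $U$, which completes the argument.

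The main point requiring care is purely organisational: one must consistently read off all decompositions in the Hodge decomposition of the \emph{central} fibre — which is exactly where simplicity is used, since it is what guarantees the three-term decomposition exists — and one must verify that $q_\sigma$ takes real values on the conjugation-invariant classes in play before asserting positivity. The remaining ingredients (holomorphicity of $s\mapsto\sigma_s$, the identity $\sigma_s^{n+1}=0$, and $\int_X(\sigma\bar\sigma)^n\neq0$) are all immediate, so no genuinely hard step is involved.
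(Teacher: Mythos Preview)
Your proof is correct and follows essentially the same route as the paper: decompose $\sigma_s$ in the Hodge decomposition of the central fibre, use $\sigma_s^{n+1}=0$ together with Lemma~\ref{lem: bbf-vanishing} and $\lambda_s\neq 0$ for the vanishing, and for the positivity compute $q_\sigma(\sigma+\bar\sigma)=(\int_X(\sigma\bar\sigma)^n)^2>0$ and conclude by continuity. You supply more detail than the paper (the explicit decomposition of $\sigma_s+\overline{\sigma_s}$ and the displayed formula for $q_\sigma$ on it), but the argument is the same.
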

\begin{proof}
With respect to the complex structure for $\kx_{s}$, the class $\sigma_{s}$ is of type $(2,0)$; thus, its powers beyond $n := \tfrac{1}{2}\dim(X)$ vanish.
Furthermore, in the type decomposition $\sigma_{s} = \lambda_{s}\sigma+(\sigma_{s})_{(1,1)}+\mu_{s}\bar{\sigma}$ (with respect to $X=\kx_{0}$), the coefficient $\lambda_{s}$ is different from zero for $s$ sufficiently close to $0$, for continuity reasons.
Thus, Lemma~\ref{lem: bbf-vanishing} gives $q_{\sigma}(\sigma_{s}) = 0$ in an open neighbourhood of $0$.
Similarly, $q_{\sigma}(\sigma_{s}+\bar{\sigma_{s}})$ is real and varies continuously with $s\in S$; hence, $q_{\sigma}(\sigma+\bar{\sigma}) = (\int_{X}(\sigma\bar{\sigma})^{n})^2>0$ implies that $q_{\sigma}(\sigma_{s}+\bar{\sigma_{s}})>0$ for each $s$ in a certain open neighbourhood, as claimed.
\end{proof}

If $(X,\sigma)$ is a simple $\del\delbar$-complex symplectic manifold, then so are the neighbouring fibres in the universal deformation $\kx\to \mathrm{Def}(X)$ (cf.\ Corollary~\ref{cor: universal deformation}) and we can choose a $\Sigma$ extending $\sigma\in H^{2,0}(X)$, and in fact there is only one such up to invertible resacling.
In particular, the line spanned by $\Sigma|_{\kx_{s}}$ in $H^2(X,\IC)$ does not depend on the choice of $\Sigma$; clearly, this recovers the period map.
Therefore, we conclude:

\begin{cor}
Let $(X,\sigma)$ be a simple $\del\delbar$-complex symplectic manifold.
Then the period map $\kp_{X}\colon \mathrm{Def}(X)\to \IP(H^{2}(X,\IC)^{\vee})$ takes values in
$$Q_{X} := \{\IC\alpha\in  \IP(H^{2}(X,\IC)^{\vee})\mid q_{\sigma}(\alpha)=0\text{ and } q_{\sigma}(\alpha+\bar{\alpha})>0\}.$$
\end{cor}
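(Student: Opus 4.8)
Since the substantive work has already been carried out, the plan is simply to assemble the preceding statements. By Corollary~\ref{cor: universal deformation} the base $\Def(X)$ is smooth, and after shrinking it Proposition~\ref{prop: stability-under-defs}, together with the constancy of Hodge numbers established in its proof, ensures that every fibre $\kx_s$ of the universal family $\kx\to\Def(X)$ is again a $\del\delbar$-complex symplectic manifold with $h^{2,0}(\kx_s)=h^{2,0}(X)=1$, hence simple. As noted just before the statement, one may then pick a relative holomorphic $2$-form $\Sigma$ restricting to $\sigma$ on $X$, unique up to an invertible scalar, and for every $s$ the class $\sigma_s:=\Sigma|_{\kx_s}$ generates $H^{2,0}(\kx_s)$; thus $\kp_X(s)$ is the line $\IC\sigma_s\subset H^2(X,\IC)$, independently of the choice of $\Sigma$.

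Next I would check that the two conditions cutting out $Q_X$ depend only on the line $\IC\alpha$: homogeneity gives $q_\sigma(t\alpha)=t^2 q_\sigma(\alpha)$, so $q_\sigma(\alpha)=0$ is scale invariant, and whenever it holds one also has $q_\sigma(\bar\alpha)=\overline{q_\sigma(\alpha)}=0$ (using that the defining formula of $q_\sigma$ is symmetric in $\sigma$ and $\bar\sigma$, so $q_{\bar\sigma}=q_\sigma$), whence $q_\sigma(t\alpha+\overline{t\alpha})=|t|^{2}q_\sigma(\alpha+\bar\alpha)$ and the sign condition is scale invariant too. So $Q_X$ is a well-defined subset of $\IP(H^2(X,\IC)^\vee)$.

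The statement then follows at once: apply Corollary~\ref{cor: simple bbf vanishing} to the universal family $\kx\to\Def(X)$ and shrink $\Def(X)$ so that it lies inside the neighbourhood $U$ produced there; this yields $q_\sigma(\sigma_s)=0$ and $q_\sigma(\sigma_s+\bar{\sigma_s})>0$ for every $s$, that is, $\kp_X(s)=\IC\sigma_s\in Q_X$. There is no genuine obstacle, since this is a repackaging of earlier lemmas; the only points that need a little care are bookkeeping ones — matching the projective target $\IP(H^2(X,\IC)^\vee)$ with the Grassmannian description of the period map in Theorem~\ref{thm: general local torelli} (identifying the line $\IC\sigma_s$ with the point of $\IP(H^2(X,\IC)^\vee)$ it determines, e.g.\ via the hyperplane $F^1$), and invoking the scaling behaviour $q_{t\sigma}=|t|^{4n-2}q_\sigma$ recorded after Definition~\ref{def: bbf-form} to see that $Q_X$ is independent of the chosen symplectic form $\sigma$ on $X$.
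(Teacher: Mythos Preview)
Your proposal is correct and follows exactly the route the paper takes: the paper does not give a separate proof of this corollary but derives it in the paragraph preceding it, by noting that nearby fibres are again simple (via Corollary~\ref{cor: universal deformation} and Proposition~\ref{prop: stability-under-defs}), that the line $\IC\sigma_s$ recovers the period map, and then invoking Corollary~\ref{cor: simple bbf vanishing}. Your write-up adds the verification that the two conditions defining $Q_X$ are scale-invariant, which the paper leaves implicit; this is a welcome bit of bookkeeping and does not deviate from the paper's argument.
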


With this result at hand, we can finally state the strengthened local Torelli theorem in the simple case.

\begin{thm}[Local Torelli Theorem, simple case]\label{thm: simple local torelli}
For a simple $\del\delbar$-complex symplectic manifold $X$, the period map $\kp\colon\Def(X)\to Q_{X}$ is a local isomorphism.
\end{thm}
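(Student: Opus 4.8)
The plan is to leverage what is already in place: by Corollary~\ref{cor: universal deformation} the space $\Def(X)$ is smooth, by Theorem~\ref{thm: general local torelli} the period map $\kp$ is an immersion, and by the corollary just above it takes values in $Q_X$. To promote ``immersion into $Q_X$'' to ``local isomorphism'' it suffices to check, near the base point $0$, that $\dim\Def(X)$ equals the local dimension of $Q_X$ at $\kp(0)$ and that $Q_X$ is \emph{smooth} there. Granting this, the differential $d_{0}\kp$ --- which a priori lands in the tangent space of the ambient Grassmannian, but in fact lands in $T_{\kp(0)}Q_X$ since the image of $\kp$ lies in $Q_X$ --- is an injective linear map between two spaces of the same finite dimension, hence an isomorphism, and the holomorphic inverse function theorem makes $\kp$ a biholomorphism from a neighbourhood of $0$ onto a neighbourhood of $\kp(0)$ in $Q_X$.

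For the dimension count, the proof of Theorem~\ref{thm: general local torelli} shows that contraction with $\sigma$ gives an isomorphism $H^1(X,\kt_X)\isom H^{1,1}(X)$, whence $\dim\Def(X) = h^1(X,\kt_X) = h^{1,1}(X)$. On the other hand, simplicity gives $h^{2,0}(X) = 1$ and therefore also $h^{0,2}(X) = 1$, so the Hodge decomposition afforded by the $\del\delbar$-Lemma yields $b_2(X) = h^{2,0}(X)+h^{1,1}(X)+h^{0,2}(X) = h^{1,1}(X)+2$. Hence the quadric $\{q_\sigma = 0\}$ is a hypersurface in $\IP(H^2(X,\IC))$, a projective space of dimension $b_2(X)-1$, and so has dimension $b_2(X)-2 = h^{1,1}(X)$ at each of its smooth points --- the two numbers coincide, which is the crux of the matter.

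It then remains to verify that $\kp(0) = [\sigma]$ is a smooth point of that quadric. Writing $\tilde q_\sigma$ for the symmetric bilinear form polarising $q_\sigma$, this amounts to $\tilde q_\sigma(\sigma,-)\not\equiv 0$ on $H^2(X,\IC)$. Because $X$ is simple, an arbitrary class can be written $\alpha = \lambda\sigma+\alpha_{(1,1)}+\mu\bar\sigma$, and the computation performed in the proof of Lemma~\ref{lem: bbf-vanishing} gives
\[
q_\sigma(\alpha) = \lambda\mu\left(\int_X(\sigma\bar\sigma)^n\right)^2 + \frac{n}{2}\left(\int_X\alpha_{(1,1)}^2(\sigma\bar\sigma)^{n-1}\right)\int_X(\sigma\bar\sigma)^n,
\]
so that $\tilde q_\sigma(\sigma,\bar\sigma) = \tfrac{1}{2}\bigl(\int_X(\sigma\bar\sigma)^n\bigr)^2\neq 0$, the nonvanishing being immediate since $\sigma^n$ trivialises $K_X$. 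Thus the affine cone $\{q_\sigma = 0\}\subset H^2(X,\IC)$ is smooth at $\sigma\neq 0$, the quadric is smooth at $[\sigma]$, and, as $Q_X$ is the open subset of it cut out by $q_\sigma(\alpha+\bar\alpha)>0$ and contains $[\sigma]$ (indeed $q_\sigma(\sigma+\bar\sigma) = \bigl(\int_X(\sigma\bar\sigma)^n\bigr)^2>0$, cf.\ Corollary~\ref{cor: simple bbf vanishing}), it is a smooth complex manifold of dimension $h^{1,1}(X)$ in a neighbourhood of $\kp(0)$.

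Putting the pieces together proves the statement at $0$; the same reasoning applies at every other point of $\Def(X)$, since the nearby fibres are again simple $\del\delbar$-complex symplectic (Proposition~\ref{prop: stability-under-defs}) and $\Def(X)$ is universal (Corollary~\ref{cor: universal deformation}), so each may play the role of $X$. The one point that requires a genuine, if short, computation rather than pure bookkeeping is the smoothness of $Q_X$ at the period point; beyond that I do not expect any serious obstacle.
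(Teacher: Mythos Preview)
Your argument is correct and follows essentially the same route as the paper: both spaces are smooth of dimension $h^{1,1}(X)=b_2(X)-2$, the period map is an immersion by Theorem~\ref{thm: general local torelli}, hence a local isomorphism. The only difference is that the paper invokes \cite[Theorem~2]{CattaneoTomassini16} for the smoothness of $Q_X$ (non-degeneracy of $q_\sigma$), whereas you verify smoothness directly at the period point via $\tilde q_\sigma(\sigma,\bar\sigma)\neq 0$; your computation is correct (and in fact your displayed formula for $q_\sigma(\alpha)$ fixes a typo in the proof of Lemma~\ref{lem: bbf-vanishing}, where the square on $\alpha_{(1,1)}$ is missing).
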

\begin{proof}
The spaces $\Def(X)$ and $Q_{X}$ are smooth (by Theorem~\ref{thm: unobstructed}, respectively, \cite[Theorem 2]{CattaneoTomassini16}), and have the same dimension $h^{1,1}(X) = h^{2}(X)-2$.
Hence, the claim follows from the general local Torelli Theorem \ref{thm: general local torelli}.
\end{proof}

For later reference we record the following consequence.
\begin{cor}\label{cor: quadrics agree on simple}
If $(X,\sigma)$ is a simple $\del\delbar$-complex symplectic manifold and if $\sigma'$ is the symplectic structure on a nearby fibre in the universal deformation space, then $q_{\sigma}$ and $q_{\sigma'}$ define the same quadric in $H^2(X,\IC)$.
\end{cor}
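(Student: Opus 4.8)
The plan is to prove that $q_{\sigma}$ and $q_{\sigma'}$ are proportional as quadratic forms by exhibiting a common zero locus that is large enough to determine an irreducible quadric, and then invoking irreducibility. If $\Def(X)$ is a point there is nothing to prove, so I would assume $\dim\Def(X) = h^{1}(X,\kt_X) = h^{1,1}(X)\geq 1$. First I would shrink $\Def(X)$: by Proposition~\ref{prop: stability-under-defs} (together with upper semicontinuity of $h^{2,0}$ and the constancy of the Hodge numbers established in its proof) every fibre $\kx_s$ of the universal family $f\colon\kx\to\Def(X)$ is again a simple $\del\delbar$-complex symplectic manifold, and one may arrange in addition that $R^{2}f_{*}\IC$ is a trivial local system. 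This provides canonical isomorphisms $H^{2}(\kx_s,\IC)\cong H^{2}(X,\IC)$, through which I regard every $q_{\sigma_s}$, and in particular $q_{\sigma'}$ for the nearby fibre in question, as a quadratic form on $H^{2}(X,\IC)$; this is what makes the statement meaningful in the first place.

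Write $\sigma' = \sigma_s = \Sigma|_{\kx_s}$ for the relevant point $s$, where $\Sigma$ is the chosen extension of $\sigma$ over $\Def(X)$. The key step is to place the image of the period map inside \emph{both} quadrics near $s$. On one hand, the simple local Torelli Theorem~\ref{thm: simple local torelli} says that $\kp_{X}\colon\Def(X)\to Q_{X}$ is a local isomorphism, so for a small enough neighbourhood $U$ of $s$ the set $\kp_{X}(U)$ is a non-empty open subset of $Q_{X}$, hence of the quadric hypersurface $\{q_{\sigma}=0\}\subset\IP(H^{2}(X,\IC))$. On the other hand, applying Corollary~\ref{cor: simple bbf vanishing} to the restricted deformation $\kx|_{U}\to U$ of the simple manifold $\kx_s$, using the restriction of $\Sigma$ as the extension of $\sigma' = \Sigma|_{\kx_s}$, yields $q_{\sigma'}(\sigma_t) = 0$ for every $t\in U$; that is, $\kp_{X}(U)\subset\{q_{\sigma'}=0\}$ as well.

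To conclude I would use that $q_{\sigma}$ is non-degenerate on $H^{2}(X,\IC)$ by \cite[Theorem 2]{CattaneoTomassini16} and that $\dim_{\IC}H^{2}(X,\IC) = h^{1,1}(X)+2\geq 3$, so that $\{q_{\sigma}=0\}$ is a smooth, hence irreducible, quadric hypersurface. Since $q_{\sigma'}$ vanishes on the non-empty open subset $\kp_{X}(U)$ of this irreducible hypersurface, it vanishes on all of $\{q_{\sigma}=0\}$, and as $q_{\sigma}$ is an irreducible quadratic form this forces $q_{\sigma'}=c\,q_{\sigma}$ for some $c\in\IC^{\times}$; in particular the two forms cut out the same quadric. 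I do not expect a serious obstacle here: the only mildly delicate points are the bookkeeping of the cohomology identifications and the irreducibility of $\{q_{\sigma}=0\}$, both routine given the cited results, and the genuine content is simply the observation that the period locus is at once open in the quadric attached to $X$ and contained in the quadric attached to the nearby fibre.
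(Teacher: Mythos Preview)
Your proposal is correct and follows essentially the same route as the paper: both argue that the image of the period map lies in the two quadrics and is open in at least one of them, and then use that a smooth (hence irreducible) quadric hypersurface is determined by any of its non-empty open subsets. The only cosmetic difference is that the paper invokes the simple local Torelli theorem a second time for the nearby fibre $\kx_{s}$ (using that $\Def(X)$ is also universal for $\kx_{s}$) to get openness in \emph{both} quadrics, whereas you use Corollary~\ref{cor: simple bbf vanishing} to get mere containment in $\{q_{\sigma'}=0\}$ and then appeal to irreducibility; either variant closes the argument.
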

\begin{proof}
In fact, the universal deformation space of $X$ is also the universal deformation space for all nearby fibres $\kx_{s}$ and the implicit identification $H^2(\kx_{s},\IC)\cong H^2(X,\IC)$ granted by Ehresmann's Theorem is compatible with the respective period maps.
Thus, the two smooth hypersurfaces in $\IP(H^2(X,\IC))$ defined by $q_{\sigma}$ and $q_{\sigma_{s}}$ have the image of the peroid map as an open subset in common; consequently, they agree.
\end{proof}

We give a sample application how this is often used in the theory of hyperk\"ahler manifolds.
\begin{prop}
Let $(X, \sigma)$ be a simple $\del\delbar$-complex symplectic manifold. Then a very general small deformation of $X$ has algebraic dimension zero and does not contain any effective divisor. 
\end{prop}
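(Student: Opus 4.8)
The plan is to reduce both conclusions to the single statement that a \emph{very general} small deformation $\kx_t$ of $X$ carries no nonzero effective divisor, and to produce such a $\kx_t$ from the period description of $\Def(X)$ together with a countable-union argument. For the reduction, note that on a connected compact complex manifold any non-constant meromorphic function has a nonzero polar divisor, so $a(\kx_t)=0$ as soon as $\kx_t$ admits no nonzero effective divisor. Throughout, by Proposition~\ref{prop: stability-under-defs} and Corollary~\ref{cor: universal deformation} the fibres of the universal family $\kx\to\Def(X)$ are again simple $\del\delbar$-complex symplectic, and Ehresmann's theorem furnishes compatible identifications $H^2(\kx_t,\IZ)\cong H^2(X,\IZ)$ and $H^2(\kx_t,\IC)\cong H^2(X,\IC)$.

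First I would show that a nonzero effective divisor forces a nonzero integral $(1,1)$-class. From the exponential sequence and the $\del\delbar$-Lemma, the image of $H^1(\kx_t,\IZ)$ in $H^1(\kx_t,\ko_{\kx_t})$ is a lattice, so $\ker\bigl(c_1\colon\operatorname{Pic}(\kx_t)\to H^2(\kx_t,\IZ)\bigr)=\operatorname{Pic}^0(\kx_t)$ is a complex torus. By Theorem~\ref{thm: Albanese} the Albanese map $\alpha\colon\kx_t\to\Alb(\kx_t)$ is a surjective submersion with connected fibres, so $\alpha_*\ko_{\kx_t}=\ko_{\Alb(\kx_t)}$ and $\alpha^*$ identifies $\operatorname{Pic}^0(\kx_t)$ with $\operatorname{Pic}^0$ of the torus $\Alb(\kx_t)$; by the projection formula, a line bundle in $\operatorname{Pic}^0(\kx_t)$ therefore has no nonzero section unless it is trivial. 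Consequently, a nonzero effective divisor $D$ on $\kx_t$ has $\delta:=c_1(\ko_{\kx_t}(D))$ non-torsion (if $m\delta=0$, then $\ko_{\kx_t}(mD)\in\operatorname{Pic}^0(\kx_t)$ carries a nonzero section, forcing $mD=0$), so $D$ produces a nonzero class $\delta\in H^2(\kx_t,\IC)$ of type $(1,1)$ for $\kx_t$.

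Next I would rule out, for very general $t$, that any nonzero class of $H^2(X,\IZ)$ is of type $(1,1)$ for $\kx_t$. Since $X$ is simple, $H^2(\kx_t,\IC)=\IC\sigma_t\oplus H^{1,1}(\kx_t)\oplus\IC\bar\sigma_t$; by the properties of the Beauville--Bogomolov--Fujiki form from \cite{CattaneoTomassini16}, together with Corollary~\ref{cor: quadrics agree on simple} (which makes $q_{\sigma_t}$ proportional to $q_\sigma$), the subspace $H^{1,1}(\kx_t)$ is the $q_\sigma$-orthogonal complement of $\IC\sigma_t\oplus\IC\bar\sigma_t$. As Chern classes are real and $q_\sigma$ is conjugation-equivariant, a class $\delta\in H^2(X,\IZ)$ lies in $H^{1,1}(\kx_t)$ if and only if $b_\sigma(\delta,\sigma_t)=0$, where $b_\sigma$ is the non-degenerate polarisation of $q_\sigma$. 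Locally the period map identifies $\Def(X)$ with an open subset of the smooth quadric $Q_X\subset\IP(H^2(X,\IC)^\vee)$ (Theorems~\ref{thm: general local torelli} and~\ref{thm: simple local torelli}), and under this identification the locus $B_\delta=\{\,t:\delta\in H^{1,1}(\kx_t)\,\}$ becomes the intersection of an open set with the hyperplane section $\{\,b_\sigma(\delta,\cdot)=0\,\}\cap Q_X$; for $\delta\neq0$ this hyperplane section is proper, since $b_\sigma$ is non-degenerate and a smooth quadric of positive dimension lies in no hyperplane. Hence each $B_\delta$ ($\delta\neq0$) is a nowhere-dense closed analytic subset of $\Def(X)$, the complement of $\bigcup_{0\neq\delta\in H^2(X,\IZ)}B_\delta$ in $\Def(X)$ is dense by Baire, and any $t$ in it is a very general small deformation; for such $t$, no nonzero integral class is of type $(1,1)$, so $\kx_t$ carries no nonzero effective divisor by the previous step, whence $a(\kx_t)=0$.

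I expect the main obstacle to be the second step: without a K\"ahler class one cannot invoke positivity to see that an effective divisor has nonzero cohomology class, and it is precisely here that the structure theory of Section~\ref{sect: albanese} — that $\operatorname{Pic}^0$ is controlled by the Albanese torus — is needed to rule out effective divisors with vanishing or torsion Chern class. (In the degenerate case $h^{1,1}(X)=0$ the space $\Def(X)$ and the quadric $Q_X$ are $0$-dimensional and the period argument is vacuous; but then $H^{1,1}(X)=0$ already, and the $\operatorname{Pic}^0$-argument applied to $X$ itself shows directly that $X$ carries no nonzero effective divisor and $a(X)=0$.)
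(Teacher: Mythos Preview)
Your proof is correct, and the period-map half (the countable hyperplane sections of $Q_X$, Baire, and the identification of $H^{1,1}(\kx_t)$ as the $q_\sigma$-orthogonal of $\IC\sigma_t\oplus\IC\bar\sigma_t$) is essentially the paper's argument. The genuine divergence is in how you exclude effective divisors whose Chern class is torsion or zero. The paper does this with currents: once $c_1(D)$ is a rational $(1,1)$-class forced to vanish, the $\del\delbar$-Lemma lets one write the current of integration $[D]$ as $i\del\delbar f$ for a global smooth function $f$; Lelong's theorem makes $f$ plurisubharmonic on the compact manifold, hence constant, so $D=0$. You instead feed Theorem~\ref{thm: Albanese} back into the proof: $\alpha_*\ko_{\kx_t}=\ko_{\Alb(\kx_t)}$ together with $\alpha^*$ being an isomorphism on $H^1$ identifies $\operatorname{Pic}^0(\kx_t)$ with $\operatorname{Pic}^0$ of a torus, whence any $L\in\operatorname{Pic}^0(\kx_t)$ with a nonzero section is trivial, and so an effective $D$ with torsion $c_1$ is forced to be zero. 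Both arguments are short; the paper's is self-contained at that step (no appeal to Section~\ref{sect: albanese}) but brings in current theory, while yours stays purely sheaf-theoretic and nicely recycles the paper's own structural result. One minor remark: the degenerate case $h^{1,1}(X)=0$ you handle separately does not occur --- the paper cites \cite[Theorems~1 and~2]{CattaneoTomassini16} for $h^{1,1}(X)>0$ on any simple $\del\delbar$-complex symplectic manifold.
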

\begin{proof}
Consider the countable union of hyperplanes of the form $\alpha^\perp\subset \IP (H^2(X, \IC))$ for all $\alpha\in H^2(X, \IQ)\setminus 0$, where the orthogonal complement is computed with respect to the quadratic form $q_\sigma$.
The complement of this union in $Q_{X}$, say $V\subset Q_{X}$, is inhabited, as $h^{1,1}(X)>0$ (by \cite[Theorem 1 \& 2]{CattaneoTomassini16}).
Since the period map is a local isomorphism onto $Q_X$, we can, therefore, choose a small deformation $(X',\sigma')$ whose period point $[\sigma']$ lies in $V$.
By Corollary~\ref{cor: quadrics agree on simple}, the quadratic forms $q_{\sigma}$ and $q_{\sigma'}$ agree up to an invertible scalar factor and so $\sigma'$ is not orthogonal to any $\alpha\in H^2(X,\IQ)\setminus 0$ also with respect to $q_{\sigma'}$.
But by \cite[Lemma 2.11]{CattaneoTomassini16}, every $(1,1)$-class is orthogonal to $\sigma'$; hence, $H^{1,1}_{\delbar}(X')\cap  H^2(X, \IQ) =0$.

Now assume this small deformation $(X', \sigma')$ contains an effective divisor $D$.
Then the class of $D$ in cohomology is a rational class of type $(1,1)$ and, hence, trivial.
By the $\del\delbar$-Lemma, we have $[i\del\delbar f] = c_1(D)$ for some smooth function $f$, which is plu\-ri\-sub\-har\-mon\-ic since $i\del\delbar f$ is the current of integration along $D$, hence positive by Lelong's theorem (cf. \cite[IV 3 Ex.\ 3.2]{BHPV} and the references therein, for example).
But plurisubharmonic functions on compact manifolds are constant and so $D$ is trivial.

In particular, all meromorphic functions on $X'$ are constant, for any non-constant meromorphic function would give rise to a divisor.
\end{proof}

\subsection{Period maps in case \texorpdfstring{$h^{2,0}>1$}{h20 > 1}}\label{sect: non-primitive periods}
For a non-simple  $\del\delbar$-complex symplectic manifold $(X, \sigma)$, the period map considered in Section~\ref{sec: local torelli} maps into the Gra{\ss}mannian variety $\Grass(h^{2,0}(X),H^{2}(X,\IC))$, whereas the quadratic form $q_\sigma$ defines a quadric in $\IP(H^2(X,\IC))$.
However, if we consider a family of complex symplectic manifolds $f\colon\kx\to S$ together with a family of symplectic forms $\sigma_{s}\in H^{2,0}(\kx_{s})$, depending holomorphically on $s\in S$, we can consider the map $S\to \IP(H^2(X,\IC))$, $s\mapsto \IC\sigma_{s}$, resembling the period map in the simple case.

A natural question arises here, namely, whether the image of this map is contained in the quadric defined by $q_{\sigma_{0}}$, at least for $s\in S$ sufficiently close to $0\in S$.
The examples provided below (Example~\ref{ex: 4-torus} and \ref{ex: products}) show that this is not the case.

Nonetheless, it seems worthwhile to make the above idea precise in a universal fashion.
Let $f\colon \kx\to\Def(X)$ be the universal family.
We consider the pullback $\kp^{*}\gothU$ of the tautological vector bundle $\gothU\subset H^2(X,\IC)\times \Grass(h^{2,0}(X),H^2(X,\IC))$ on the Gra{\ss}mannian variety along the period map $\kp\colon \Def(X)\to \Grass(h^{2,0}(X),H^2(X,\IC))$.
For sake of properness, we pass to the projectivisations $\IP(\kp^{*}\gothU)\to \IP(\gothU)$.
Over a point $s\in\Def(X)$, this gives the linear inclusion $\IP(H^{2,0}(\kx_{s}))\subset \IP(H^{2}(X,\IC))$.
The subset of $\IP(\kp^{*}\gothU)$ consisting of the classes of symplectic forms is open.
Therefore, the germ $\Def(X,\sigma)\subset \IP(\kp^{*}\gothU)$ of the class of a symplectic form $\sigma \in H^{2,0}(X)$ is an analytic germ with a natural map $\Def(X,\sigma)\to\Def(X)$, whose fibre over $s\in \Def(X)$ consists of the classes $[\sigma_{s}]\in H^{2,0}(\kx_{s})$ of symplectic structures on $\kx_{s}$ near $\sigma_{0}$.
It seems natural to consider the map into the partial flag manifold
\begin{align}\label{eq: refined period map (to flag)}
\Def(X,\sigma)\to\mathrm{Flag}(1,h^{2,0}(X);H^2(X,\IC)),\quad (\kx_{s},[\sigma_{s}])\mapsto([\sigma_{s}],[H^{2,0}(\kx_{s})]),
\end{align}
refining the period map.
Note that the composition of this map with the projection onto the Gra{\ss}mannian $\Grass(h^{2,0},H^2(X,\IC))$ recovers the period map; since the latter is injective by the local Torelli theorem~\ref{thm: general local torelli}, so is this period-like map.

The composition of the map \eqref{eq: refined period map (to flag)} with the projection of the flag variety onto $\IP(H^2(X,\IC))$ gives the map
\begin{align}\label{eq: naive period map}
\Def(X,\sigma)\to \IP(H^2(X,\IC)),\quad (\kx_{s},[\sigma_{s}])\mapsto[\sigma_{s}],
\end{align}
resembling the period map in the simple case even if $X$ is not simple.
We will refer to it as the \emph{naive period map}.

The following  examples show that for non-simple $\del \delbar$-complex symplectic manifolds the image of the naive period map \eqref{eq: naive period map} may not be contained in the quadric defined by the Beauville--Bogomolov--Fujiki form.

\begin{exam}[Complex tori]\label{ex: 4-torus}
Let $X$ be a $4$-dimensional complex torus and denote as usual by $dz^1, \ldots, dz^4$ the standard holomorphic coframe of $(1, 0)$-forms. Consider the complex deformation of $X$ given by
\[\left\{ \begin{array}{l}
dw^1 = dz^1 + t_1 d\bar{z}^3\\
dw^2 = dz^2 + t_2 d\bar{z}^4\\
dw^3 = dz^3 + t_1 d\bar{z}^1\\
dw^4 = dz^4 + t_2 d\bar{z}^2,
\end{array} \right.\]
and the complex symplectic form on $X_t$
\[\sigma_t = dw^{12} + dw^{34} + t_3 dw^{13} + t_4 dw^{24},\]
where $t = (t_1, t_2, t_3, t_4)$ varies in a small polydisc centered in the origin of $\IC^4$. Observe that on the central fibre $X = X_0$ the form $\sigma_0$ reduces to the standard symplectic form $\sigma = dz^{12} + dz^{34}$. We will show that $q_\sigma(\sigma_t) \neq 0$, namely, that
\[q_{\sigma}(\sigma_t) = \int_{X} (\sigma\bar{\sigma})^{2} \int_{X} \sigma_t^{2} \sigma \bar{\sigma} - \int_{X} \sigma_t \sigma \bar{\sigma}^{2} \int_{X} \sigma_t \sigma^{2} \bar{\sigma} \neq 0.\]
Set $\Vol = dz^{1234} \wedge dz^{\bar{1}\bar{2}\bar{3}\bar{4}}$, then we can compute the four integrals involved:
\begin{enumerate}
\item $\int_{X} (\sigma\bar{\sigma})^{2} = 4 \int_X \Vol$;
\item $\int_{X} \sigma_t^{2} \sigma \bar{\sigma} = 4 t_1 t_2 (1 - t_3 t_4) \int_X \Vol$;
\item $\int_{X} \sigma_t \sigma \bar{\sigma}^{2} = 4 \int_X \Vol$;
\item $\int_{X} \sigma_t \sigma^{2} \bar{\sigma} = 4 t_1 t_2 \int_X \Vol$.
\end{enumerate}
Therefore,
\[q_{\sigma}(\sigma_t) = -16 t_1 t_2 t_3 t_4 \left( \int_X \Vol \right)^2,\]
which is clearly not identically zero on the polydisc.
\end{exam}

\begin{exam}[Products of $\del\delbar$-complex symplectic manifolds]\label{ex: products}
We now consider the deformation space and period map for products of $\del\delbar$-complex symplectic manifolds. For simplicity, we stick to the case of two factors. 

So assume that $(X_1, \sigma_1)$ and $(X_2, \sigma_2)$ are $\del\delbar$-complex symplectic manifolds and let $X=X_1\times X_2$. Assume further that $H^1(X_2, \IC)=0$.
Then we have 
\begin{equation}\label{eq: product of H2}
H^2(X, \IC) = H^2(X_1, \IC) \oplus H^2(X_2, \IC) \supset H^{2,0}(X) = H^{2,0}(X_1)\oplus H^{2,0}(X_2)
\end{equation}
and 
\[H^1(X, \kt_X) = H^1(X_1, \kt_{X_1})\oplus H^1(X_2, \kt_{X_2}),\]
where the latter is proved either using the isomorphism $\kt_X\isom \Omega^1_X$ provided by the symplectic form or simply by the Leray spectral sequence for the projection onto one factor. Note that this behaviour hinges on $b_1(X_2)=0$, as products of tori show.

Since $\Def(X)$ is universal by Corollary~\ref{cor: universal deformation} we conclude that 
\[\Def(X) = \Def(X_1)\times \Def(X_2).\]

Therefore, the period map $\kp_X$ can be decomposed as in the following diagram:
{\small
\[\begin{tikzcd}
\Def(X) \arrow[equal]{d}\arrow{r}{\kp} & \Grass(h^{2,0}(X), H^2(X, \IC))\\
\Def({X_1})\times \Def({X_2}) \rar{\kp_1\times \kp_2} & \Grass(h^{2,0}(X_1), H^2(X_1, \IC))\times \Grass(h^{2,0}(X_2), H^2(X_2, \IC)) \arrow{u}[swap]{(U_1, U_2)\mapsto U_1\oplus U_2}
\end{tikzcd}\]
}
We see that even if we start with $X_1$ and $X_2$ simple, the codimension of the image of the period map increases drastically.

We now specialise this example to show that the image of the naive period map \eqref{eq: naive period map} is not contained in the zero-locus of the Beauville--Bogomolov--Fujiki quadratic form.

Let $X_1$ and $X_2$ be two $K3$ surfaces, and $X = X_1 \times X_2$. Denote by $\sigma_i \in H^{2, 0}(X_i)$ generators such that $\int_{X_i}\sigma_i\bar\sigma_i = 1$. For ease of notation we do not distinguish forms on $X_i$ and their pullbacks, that is, the product symplectic form on $X$ is $\sigma = \sigma_1+\sigma_2$ and it satisfies  $\int_{X} (\sigma \bar{\sigma})^2 = 4$ by our normalisation for the $\sigma_i$ chosen above. 

Observe that on $X_i$ the Beauville--Bogomolov--Fujiki form associated to $\sigma_i$ reads as
\[q_{\sigma_i}(\varphi_i) = \frac{1}{2} \int_{X_i} \varphi_i^2, \qquad \varphi_i \in H^2(X_i, \IC),\]
and in particular it is independent of $\sigma_i$.  The Beauville--Bogomolov--Fujiki quadratic form associated to $\sigma$ is then
\[q_\sigma(\varphi) = 4 \int_X \varphi^2 \sigma \bar{\sigma} - \int_X \varphi \sigma \bar{\sigma}^2 \int_X \varphi \sigma^2 \bar{\sigma},\]
and decomposing $\varphi =  \varphi_1 + \varphi_2$, where $\phi_i\in H^2(X_i, \IC)$,  we can compute that
\begin{align*}
\int_X \varphi^2 \sigma \bar{\sigma} &= \int_{X_1} \varphi_1^2 + 2 \int_{X_1} \varphi_1 \sigma_1 \int_{X_2} \varphi_2 \bar{\sigma}_2 + 2 \int_{X_1} \varphi_1 \bar{\sigma}_1 \int_{X_2} \varphi_2 \sigma_2 + \int_{X_2} \varphi_2^2;\\
\int_X \varphi \sigma \bar{\sigma}^2 &= 2 \int_{X_1} \varphi_1 \bar{\sigma}_1 + 2 \int_{X_2} \varphi_2 \bar{\sigma}_2;\\
\int_X \varphi \sigma^2 \bar{\sigma} &= 2 \int_{X_1} \varphi_1 \sigma_1 + 2 \int_{X_2} \varphi_2 \sigma_2. 
\end{align*}

Hence,
\[q_\sigma(\varphi) = 8(q_{\sigma_1}(\varphi_1) + q_{\sigma_2}(\varphi_2)) - 4 \left( \int_{X_1} \varphi_1 \bar{\sigma}_1 - \int_{X_2} \varphi_2 \bar{\sigma}_2 \right) \left( \int_{X_1} \varphi_1 \sigma_1 - \int_{X_2} \varphi_2 \sigma_2 \right).\]
It is then possible to find a deformation $(X_{1, t}, \sigma_{1, t})$ such that the projection of $\sigma_{1, t}$ on the $\sigma_1$-axis is close to (but different from) $\sigma_1$ and the projection on the $\bar{\sigma}_1$-axis is close to $0$. Consider then the induced deformation $(X_t, \sigma_t) = (X_{1, t} \times X_2, \sigma_{1, t} + \sigma_2)$ of $(X, \sigma)$: to compute $q_\sigma(\sigma_t)$ we observe that
\begin{enumerate}
\item $q_{\sigma_1}(\sigma_{1, t}) = q_{\sigma_2}(\sigma_2) = 0$ since a $K3$ surface is simple;
\item $\int_{X_1} \sigma_{1, t} \bar{\sigma}_1 - \int_{X_2} \sigma_2 \bar{\sigma}_2$ is close to zero, and different to zero for $t \neq 0$;
\item $\int_{X_1} \sigma_{1, t} \sigma_1 - \int_{X_2} \sigma_2 \sigma_2 = \int_{X_1} \sigma_{1, t} \sigma_1$ is close to $1$.
\end{enumerate}
So this means that $q_\sigma(\sigma_t) \neq 0$ for $t \neq 0$.

Here is a more concrete example. We take $X_1$ to be the Kummer surface associated to a $2$-dimensional torus, and consider the deformation of $X_1$ induced by a deformation of the torus. In particular, if we let $dz^1, dz^2$ be a basis for the $(1, 0)$-forms on the torus, then we can consider
\[\left\{ \begin{array}{l}
dw^1 = dz^1 + t d\bar{z}^2\\
dw^2 = dz^2 + t d\bar{z}^1,
\end{array} \right.\]
and $\sigma_t$ induced on $X_{1, t}$ by the invariant form
\[(1 + t) dw^{12} = (1 + t) (dz^{12} + t dz^{1 \bar{1}} - t dz^{2 \bar{2}} - t^2 dz^{\bar{1} \bar{2}})\]
on the deformed torus.

Thus, the image of the naive period map is not contained in the zero locus of $q_\sigma$.
\end{exam}

\begin{rem}\label{rem: period map}
There are two more observations concerning the image of the naive period map \eqref{eq: naive period map} that should be mentioned.
If $(\kx,\Sigma)\to S$ is a deformation of a $\del\delbar$-complex symplectic manifold $(X,\sigma) = (\kx_{0},\Sigma|_{\kx_{0}})$ which does not change the complex structure, i.e., only varies the symplectic form, then $q_{\sigma}(\sigma_{s}) = 0$ for all $s\in S$, simply because $q_{\sigma}$ is trivial on $H^{2,0}(X)$.
In particular, the image of the naive period map \eqref{eq: naive period map} is contained in the quadric defined by $q_{\sigma}$ unless $X$ varies non-trivially.

Likewise, the conclusion of Corollary~\ref{cor: simple bbf vanishing} holds also in the non-simple case as long as the deformed symplectic form remains in $\mathop{Span}\{\sigma,\overline{\sigma}\}\oplus H^{1,1}(\kx_{0})\subset H^2(X,\IC)$, by the same proof.
Those deformations will be controlled by the quadric defined by $q_{\sigma}$ in $\IP(\mathop{Span}\{\sigma,\overline{\sigma}\}\oplus H^{1,1}(\kx_{0}))\subset \IP(H^2(X,\IC))$, again by the same line of arguments.
However, those special deformations seem to be of little interest. 
\end{rem}

\subsection{Period map for complex tori}\label{sect: period map tori}
We discuss one further case where the period map can be described explicitly.

Consider the real torus $T^{2n} = \IR^{2n}/\IZ^{2n}$ and let $V = \IR^{2n}\tensor\IC$. A complex structure $J_U$  on $T^{2n}$ is given by a decomposition $V^* = U\oplus \bar U$, where $U$ is identified with the space of $(1,0)$-forms. In other words, if $\pi\colon \IR^{2n}\into V \onto U^*$ is the natural projection, then $X_U = (T^{2n}, J_U)  = U/\pi(\IZ^{2n})$. 

It is well known that 
\[H^k(X_U, \IC) =\Wedge^k V^* =\bigoplus_{p+q=k} \Wedge^p U\tensor \Wedge^q\bar U\] 
is the Hodge decomposition of $X_U$ and that the local period map for the Hodge structure on $H^1$, 
\[\kp^1\colon\Def_{X_U} \to  \Grass (n, V^*),\]
is an  immersion onto an open subset, the Siegel upper half space. 
Thus, the period map for the Hodge structure on $H^2$ is given as the composition
\begin{equation}\label{eq: period map tori}
\begin{tikzcd}
\Def_{X_U} \arrow{dr}[swap]{\kp^1}\arrow{rr}{\kp^2} & & \Grass\left( \binom {n}{2}, \Wedge^2 V^*\right)   \\ 
  & \Grass(n, V^*)\arrow{ur}{h}[swap]{W\mapsto \bigwedge^2 W}   
\end{tikzcd}
\end{equation}

\begin{exam}\label{ex: period 2 torus}
Consider $2$-tori, that is, the case $n=2$ in the above diagram. Then  $\Grass\left( \binom {n}{2}, \Wedge^2 V^*\right) \isom \IP^5$ and $h$ is the Pl\"ucker embedding of the Gra{\ss}mannian $\Grass(2,4)$ as a quadric in $\IP^5$. Considering a $2$-torus $X_U$ as a  simple $\del\delbar$-complex symplectic manifold we thus recover Theorem \ref{thm: simple local torelli} in this case. Compare Section \ref{sect: computation tori} for a direct computation.
\end{exam}

To our suprise we could not track down a reference where embeddings of Gra{\ss}mannians as in \eqref{eq: period map tori} have been studied classically, so we give some indication how one might work out their geometry.

The Picard group of $\Grass\left(\binom {n}{2}, \Wedge^2 V^*\right)$ is generated by an ample line bundle $A$, which induces the Pl\"ucker embedding
$f\colon \Grass\left(\binom {n}{2}, \Wedge^2 V^*\right) \to \IP\left(\Wedge^{\binom {n}{2}}\Wedge^2 V^*\right)=\IP$, that is, $A =f^*H$, where $H$ is the hyperplane class in $\IP$.
\begin{lem}
Let $B$ be the ample generator of the Picard group of $\Grass(n, V^*)$ and let $g = f\circ h$. Then  $h^*A = g^*H=(n-1) B$. 
\end{lem}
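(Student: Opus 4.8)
The plan is to compute the pullback of the hyperplane class $H$ under the composite $g = f\circ h$ by understanding how the map $h\colon \Grass(n,V^*)\to \Grass\left(\binom n2, \Wedge^2 V^*\right)$, $W\mapsto \Wedge^2 W$, interacts with the respective Plücker embeddings. Since $\operatorname{Pic}$ of a Grassmannian is $\IZ$, generated by the ample class ($B$ for $\Grass(n,V^*)$, $A = f^*H$ for the big Grassmannian), it suffices to identify the single integer $m$ with $h^*A = mB$ — equivalently $g^*H = mB$ — and to check $m = n-1$.

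First I would recall that the Plücker line bundle on $\Grass(n,V^*)$ is $B = \det(\mathcal{Q})$ where $\mathcal{Q}$ is the universal quotient, or dually $B = \det(\mathcal{S})^{\vee}$ for the tautological subbundle $\mathcal{S}$ of rank $n$; its sections restricted to the embedding are the Plücker coordinates, i.e.\ the components of $\Wedge^n\mathcal{S} \hookrightarrow \Wedge^n V^*$. Similarly $A$ restricts to the Plücker coordinates of the big Grassmannian, namely the components of $\Wedge^{\binom n2}\!\bigl(\Wedge^2\mathcal{S}\bigr)$ inside $\Wedge^{\binom n2}\!\Wedge^2 V^*$. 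So the key computation is: for a rank-$n$ vector space $W$, the line $\det\bigl(\Wedge^2 W\bigr) = \Wedge^{\binom n2}\bigl(\Wedge^2 W\bigr)$ is canonically a power of $\det W = \Wedge^n W$, and the exponent is exactly $n-1$. This is the standard identity $\det\bigl(\Wedge^2 W\bigr) \cong (\det W)^{\otimes (n-1)}$: in $GL(W)$-representation terms, if $g$ acts on $W$ with determinant $d$, then it acts on $\Wedge^2 W$ (dimension $\binom n2$) with determinant $d^{n-1}$, which one sees by diagonalising $g$ with eigenvalues $\lambda_1,\dots,\lambda_n$ so that $\det(\Wedge^2 W) = \prod_{i<j}\lambda_i\lambda_j = \bigl(\prod_i \lambda_i\bigr)^{n-1} = d^{n-1}$. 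Pulling back along $h$, this says precisely $h^*\det\bigl(\Wedge^2\mathcal{S}\bigr) = \det(\mathcal{S})^{\otimes(n-1)}$, i.e.\ $h^*A = (n-1)B$, and hence $g^*H = f^*h^*H = h^*A = (n-1)B$.

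The only point requiring a little care — and the main (mild) obstacle — is making the "canonical" identification functorial in families, so that the eigenvalue computation, which is pointwise, globalises to an isomorphism of line bundles on $\Grass(n,V^*)$ rather than merely a fibrewise equality. This is handled by the representation-theoretic statement that $\Wedge^{\binom n2}\bigl(\Wedge^2 W\bigr)$ and $(\Wedge^n W)^{\otimes(n-1)}$ are isomorphic as $GL(W)$-modules (both are one-dimensional, and characters agree by the diagonalisation above), which then gives a canonical isomorphism of the associated line bundles $\det\bigl(\Wedge^2\mathcal{S}\bigr) \cong \det(\mathcal{S})^{\otimes(n-1)}$ on any base, in particular on $\Grass(n,V^*)$. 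Since both line bundles in $h^*A = mB$ are determined by an integer and we have exhibited $m = n-1$, the lemma follows; there is nothing further to check.
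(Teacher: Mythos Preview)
Your argument is correct and takes a genuinely different route from the paper. The paper computes the integer $m$ in $h^*A = mB$ by Schubert calculus: it picks the Schubert curve $C\simeq\IP^1$ dual to $B$ (the pencil of $n$-planes containing a fixed $V_{n-1}$ and contained in a fixed $V_{n+1}$), and evaluates $m = H\cdot g_*C$ by writing down the Pl\"ucker coordinates of $g(C)$ and counting the order of vanishing of a suitable Pl\"ucker minor, obtaining $n-1$. You instead use the representation-theoretic identity $\det\bigl(\Wedge^2 W\bigr)\cong(\det W)^{\otimes(n-1)}$ directly on the tautological subbundle, which is cleaner and avoids any explicit coordinate computation; the paper's approach, on the other hand, is more hands-on and illustrates concretely how the image curve sits inside the big Pl\"ucker space. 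One small typo: you wrote $g^*H = f^*h^*H$, but since $g = f\circ h$ this should read $g^*H = h^*f^*H = h^*A$; the conclusion is unaffected.
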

\begin{proof}
Write $h^*A = g^*\ko_{\IP}(1)=m B$. We aim to prove  that $m = n-1$. We refer to \cite{Griffiths-Harris} for the basic theory of Gra{\ss}mannians and Schubert calculus. 

Let $C$ be the Schubert cycle dual to $B$ in the cohomology ring of $\Grass(n, V^*)$, then by the projection formula we have that
\[m = m B \cdot C =  A\cdot h_*C = H\cdot g_* C.\]
Recall that $V^*$ is $2n$-dimensional, so if we fix a complete flag $V_1 \subseteq \ldots \subseteq V_i \subseteq \ldots \subseteq V_{2n} = V^*$ with $\dim V_i = i$, then $C$ parametrises the $n$-dimensional subspaces of $V^*$ which are contained in $V_{n + 1}$ and contain $V_{n - 1}$. It is then easy to see that $C \simeq \IP^1$: once we fix a basis $\{ x_1, \ldots, x_{2n}\}$ for $V^*$, then $C$ is given by
\[\left\{ \operatorname{Span} \{ x_1, \ldots, x_{n - 1}, \alpha x_n + \beta x_{n + 1} \} \middle| (\alpha: \beta) \in \IP^1 \right\}.\]
The choice of our basis induces the standard basis of $\bigwedge^2 V^*$: $\{ x_i \wedge x_j \}$ with $1 \leq i < j \leq 2n$, and for $W \in C$ we have that $h(W) = \bigwedge^2 W$ is generated by $x_i \wedge x_j$ for $1 \leq i < j \leq n - 1$ and $x_k \wedge (\alpha x_n + \beta x_{n + 1})$ for $1 \leq k \leq n - 1$. Now, considering the larger Gra{\ss}mannian in its Pl\"ucker embedding $f$, either $g(C)$ is all contained in a hyperplane, or $g(C)$ cuts such hyperplane in $m$ points. As hyperplanes are defined by linear combination of Pl\"ucker coordinates, we choose the hyperplane defined by the vanishing of a single Pl\"ucker coordinate, the one corresponding to the choice of multi-indices $(i, j)$ with $1 \leq i < j \leq n$. So (up to permutations of the columns) the corresponding coordinate is the determinant of the matrix
\[\left( \begin{array}{c|c}
\id_{\binom{n - 1}{2}} & 0\\
\hline
0 & \alpha \cdot \id_{n - 1}
\end{array} \right).\]
So we see that this determinant vanishes only for $\alpha = 0$ of order $n - 1$, this means that $m = n - 1$ and so the result follows.
\end{proof}

To understand what happens on the level of global section we note that the maps $f$, $g$, and $h$ are equivariant under the natural 
$\mathrm{Gl}(V)$ action. Thus, on global sections we get the induced map of representations
\[\begin{tikzcd}H^0(\IP, H) = H^0(A) = \Wedge^{ \binom {n}{2}}\Wedge^2 V \rar{g^*} & H^0(B^{\tensor(n-1)}) = \IS_{(n-1, \dots, n-1, 0)} V, \end{tikzcd}\]
where the right hand side is, by the Borel--Weil Theorem,  the Weyl module (see \cite{Fulton-Harris}) of the given partition.
Since the representation $H^0(B^{\tensor(n-1)})$ is irreducible and the map is non-zero, the map is actually the projection onto a direct summand of $H^0(A)$, considered as a $\mathrm{Gl}(V)$-representation.

Thus, we can extend \eqref{eq: period map tori} to the diagram
{\small
\[
 \begin{tikzcd}
\Def_{X_U} \arrow{dr}[swap]{\kp^1}\arrow{rr}{\kp^2} & & \Grass\left( \binom {n}{2}, \Wedge^2 V^*\right)    \rar{|A|} & \IP\left(\Wedge^{\binom {n}{2}}\Wedge^2 V^*\right)=\IP\\
  & \Grass(n, V^*)\arrow{ur}{h}[swap]{W\mapsto \bigwedge^2 W}    \arrow{rr}[swap]{|(n-1)B|} && \IP(\IS_{(n-1, \dots, n-1, 0)} V)^*\arrow[hookrightarrow]{u}\\ 
\end{tikzcd}
\]
}
Note  that the image of $\Grass(n, V^*)$ in $\IP$ is not the intersection of the larger Gra{\ss}mannian with the linear subspace, as shown by Example \ref{ex: period 2 torus}, and that the codimension of the image of the period map becomes very large as $n$ grows.

\section{Further Examples and questions}

\subsection{Simple \texorpdfstring{$\del\delbar$}{del-delbar}-complex symplectic manifolds}
Unfortunately, there is a lack of exmples of simple $\del\delbar$-complex symplectic manifolds which are not K\"ahler. Basically the only example we know is the following.

\begin{exam}
Let $X$ be an irreducible holomorphic symplectic (=hyperk\"ahler) manifold of dimension $2n$ and assume we have a Lagrangian $P=\IP^n \subset X$. Then we can perform the Mukai-flop of $X$ at $P$ (see \cite[Ex.\ 21.7]{HuybrechtsHKM}) and get a holomorphic symplectic manifold $X'$ which is in class $\kc$ (hence satisfies the $\del\delbar$-Lemma) but does not need to be K\"ahler\footnote{An explicit example is in \cite{Yoshioka01}}.
\end{exam}

So it remains to raise some questions on this class of manifolds.
\begin{quest}
Is there a simple $\del\delbar$-complex symplectic manifold (with $b_1=0$), which is not in Fujiki's class $\kc$?
\end{quest}
 
\begin{quest}
Is every simply connected, simple, complex symplectic manifold in Fujiki's class $\kc$ birational to a hyperk\"ahler manifold (possibly after a small deformation)? 

Indeed one could use Theorem \ref{thm: simple local torelli} to find a small deformation of $X$ which has a rational $(1,1)$-class $\alpha$ such that $q_\sigma(\alpha)>0$. Can one then construct a K\"ahler current in this class?
\end{quest}

\begin{rem}
The examples constructed by Guan \cite{Guan95a, Guan95b} probably do not satisfy the  $\del\delbar$-Lemma, although there seems no written proof for that. 

Related constructions are given by Toma \cite{Toma01}.
\end{rem}

\subsection{Complex tori}\label{sect: computation tori}

We perform some computations on the Beauville--Bogomolov--Fujiki form on complex tori in general, and then make it explicit in dimension $2$ and $4$. The Dolbeault algebra (and the Dolbeault cohomology) of a complex torus $T$ of dimension $2n$ is freely generated by the $2n$ forms of type $(1, 0)$ induced by the coordinates on $\IC^{2n}$: we call them $x_1, \ldots, x_{2n}$ and let $\bar{x}_i$ be the $(0, 1)$-form conjugate to $x_i$.

A basis for the space of $(2, 0)$-forms is $x_i \wedge x_j$ with $1 \leq i < j \leq 2n$, a basis for the space of $(0, 2)$-forms is obtained by conjugation of this one, and finally a basis for the space of $(1, 1)$-forms is $x_i \wedge \bar{x}_j$ with $1 \leq i, j \leq 2n$.

Let
\[\sigma = \sum_{1 \leq i < j \leq 2n} \lambda_{ij} x_i \wedge x_j\]
be a $(2, 0)$-form: then it is ($d$-closed, $\delbar$-closed and) non-degenerate if and only if we have $\mu \neq 0$ in the expression $\sigma^n = \mu \cdot x_1 \wedge \ldots \wedge x_{2n}$ with
\[\mu = \sum \varepsilon(i^1_1, i^1_2, \ldots, i^n_1, i^n_2) \lambda_{i^1_1 i^1_2} \cdot \ldots \cdot \lambda_{i^n_1 i^n_2}\]
and the sum is over all the partitions of $\{1, \ldots, 2n\}$ in disjoint couples $\{i^1_1, i^1_2\}$, \dots, $\{i^n_1, i^n_2\}$ with $i^t_1 < i^t_2$ for all $1 \leq t \leq n$.

\begin{rem}
The expression for $\mu$ is homogeneous of degree $n$ in the coordinates $\lambda_{ij}$.
\end{rem}

In the same way, we can compute that
\[\sigma^{n - 1} = \sum_{1 \leq i < j \leq 2n} \nu_{ij} \cdot x_1 \wedge \ldots \wedge \hat{x}_i \wedge \ldots \wedge \hat{x}_j \wedge \ldots \wedge x_{2n},\]
with
\[\nu_{ij} = \sum \varepsilon(h^1_1, h^1_2, \ldots, h^{n - 1}_1, h^{n - 1}_2) \lambda_{h^1_1 h^1_2} \cdot \ldots \cdot \lambda_{h^{n - 1}_1 h^{n - 1}_2}\]
and the sum is over all the partitions of $\{1, \ldots, 2n\} \smallsetminus \{i, j\}$ in disjoint couples $\{h^1_1, h^1_2\}, \ldots, \{h^{n - 1}_1, h^{n - 1}_2\}$ with $h^t_1 < i^h_2$ for all $1 \leq t \leq n - 1$.

From now on we assume that $\sigma$ is a symplectic form (so $\mu \neq 0$) normalized in such a way that
\[\int_T (\sigma \bar{\sigma})^n = \mu \bar{\mu} \int_T \Vol = 1,\]
where $\Vol = x_1 \wedge \ldots \wedge x_{2n} \wedge \bar{x}_1 \wedge \ldots \wedge \bar{x}_{2n}$ is the usual volume form. Writing explicitly the symmetric bilinear form associated to $q_\sigma$ we find that its expression is
{\small
\[
 2 \langle \psi, \eta \rangle_\sigma = n \int_T (\sigma \bar{\sigma})^{n - 1} \psi \eta + (1 - n) \left( \int_T \sigma^{n - 1}\bar{\sigma}^n \psi \int_T \sigma^{n}\bar{\sigma}^{n - 1} \eta + \int_T \sigma^{n - 1}\bar{\sigma}^n \eta \int_T \sigma^{n}\bar{\sigma}^{n - 1} \psi \right).
\]
}
So we see that it is only a matter of bidegree that $\langle H^{2, 0}(T), H^{2, 0}(T) \rangle_\sigma = 0$ and $\langle H^{0, 2}(T), H^{0, 2}(T) \rangle_\sigma = 0$. Moreover, it follows from \cite{CattaneoTomassini16} that $H^{2, 0}(T) \oplus H^{0, 2}(T)$ and $H^{1, 1}(T)$ are orthogonal to each other, and so the matrix expressing the bilinear form has the shape:
\[
\renewcommand{\arraystretch}{1.2}
\begin{array}{c|ccc}
 & H^{2, 0}(T) &  H^{1, 1}(T) &  H^{0, 2}(T)\\
\hline
H^{2, 0}(T) & 0 & 0 & \\
H^{1, 1}(T) & 0 &  & 0\\
H^{0, 2}(T) &  & 0 & 0
\end{array}\]
So we need only to compute $\langle H^{2, 0}(T), H^{0, 2}(T) \rangle_\sigma$ and $\langle H^{1, 1}(T), H^{1, 1}(T) \rangle_\sigma$.

We begin with $\langle x_\alpha \wedge \bar{x}_\beta, x_\gamma \wedge \bar{x}_\delta \rangle_\sigma$: if $\alpha = \gamma$ or $\beta = \delta$ this pairing is $0$, otherwise
\[\begin{array}{rl}
2 \langle x_\alpha \wedge \bar{x}_\beta, x_\gamma \wedge \bar{x}_\delta \rangle_\sigma = & n \int_T (\sigma \bar{\sigma})^{n - 1} x_\alpha \wedge \bar{x}_\beta \wedge x_\gamma \wedge \bar{x}_\delta =\\
= & (-1)^e n \nu_{\min\{ \alpha, \gamma \}, \max\{ \alpha, \gamma \}} \bar{\nu}_{\min\{ \beta, \delta \}, \max\{ \beta, \delta \}} \int_T \Vol.
\end{array}\]
Hence
\[\langle x_\alpha \wedge \bar{x}_\beta, x_\gamma \wedge \bar{x}_\delta \rangle_\sigma = (-1)^e n \frac{\nu_{\min\{ \alpha, \gamma \}, \max\{ \alpha, \gamma \}} \bar{\nu}_{\min\{ \beta, \delta \}, \max\{ \beta, \delta \}}}{2 \mu \bar{\mu}},\]
where the exponent $e$ is determined as follows:
\[\begin{array}{c|cc}
e & \beta < \delta & \beta > \delta\\
\hline
\alpha < \gamma & \alpha + \beta + \gamma + \delta + 1 & \alpha + \beta + \gamma + \delta\\	
\alpha > \gamma & \alpha + \beta + \gamma + \delta & \alpha + \beta + \gamma + \delta + 1\\
\end{array}
\]

We now compute $\langle x_\alpha \wedge x_\beta, \bar{x}_\gamma \wedge \bar{x}_\delta \rangle_\sigma$:
\[\begin{array}{rl}
2 \langle x_\alpha \wedge x_\beta, \bar{x}_\gamma \wedge \bar{x}_\delta \rangle_\sigma = & n \int_T (\sigma \bar{\sigma})^{n - 1} x_\alpha \wedge x_\beta \wedge \bar{x}_\gamma \wedge \bar{x}_\delta +\\
 & + (1 - n) \int_T \sigma^{n - 1}\bar{\sigma}^n x_\alpha \wedge x_\beta \int_T \sigma^{n}\bar{\sigma}^{n - 1} \bar{x}_\gamma \wedge \bar{x}_\delta =\\
= & (-1)^{\alpha + \beta + \gamma + \delta} n \nu_{\alpha \beta} \bar{\nu}_{\gamma \delta} \int_T \Vol +\\
 & + (1 - n) \left( (-1)^{\alpha + \beta + 1} \bar{\mu} \nu_{\alpha \beta} \int_T \Vol \cdot (-1)^{\gamma + \delta + 1} \mu \bar{\nu}_{\gamma \delta} \int_T \Vol \right),
\end{array}\]
from which we deduce
\[\langle x_\alpha \wedge x_\beta, \bar{x}_\gamma \wedge \bar{x}_\delta \rangle_\sigma = (-1)^{\alpha + \beta + \gamma + \delta} \frac{\nu_{\alpha \beta} \bar{\nu}_{\gamma \delta}}{2 \mu \bar{\mu}}.\]

In the case of a $2$-dimensional and $4$-dimensional torus respectively, we have the following Gram matrix for the Beauville--Bogomolov--Fujiki form on $H^2$.

For a $2$-dimensional torus, the situation is quite clear since we have
\[\begin{array}{l}
H^{2, 0}(T) = \operatorname{Span} \{ x_1 \wedge x_2 \},\\
H^{1, 1}(T) = \operatorname{Span} \{ x_1 \wedge \bar{x}_1, x_1 \wedge \bar{x}_2, x_2 \wedge \bar{x}_1, x_2 \wedge \bar{x}_2 \},\\
H^{0, 2}(T) = \operatorname{Span} \{ \bar{x}_1 \wedge \bar{x}_2 \},
\end{array}\]
and the expression for the BBF-bilinear-form does not depend on $\sigma$:
\[\langle \psi, \eta \rangle = \frac{1}{2} \int_T \psi \wedge \eta.\]
So, choosing any $(2, 0)$-form $\sigma = \mu x_1 \wedge x_2$ with $\int_T \sigma \bar{\sigma} = \mu \bar{\mu} \int_T x_1 \wedge x_2 \wedge \bar{x}_1 \wedge \bar{x}_2 = 1$, we have the Gram matrix
\[\left( \begin{array}{c|cccc|c}
0 & 0 & 0 & 0 & 0 & \frac{1}{2 \mu \bar{\mu}}\\
\hline
0 & 0 & 0 & 0 & -\frac{1}{2 \mu \bar{\mu}} & 0\\
0 & 0 & 0 & \frac{1}{2 \mu \bar{\mu}} & 0 & 0\\
0 & 0 & \frac{1}{2 \mu \bar{\mu}} & 0 & 0 & 0\\
0 & -\frac{1}{2 \mu \bar{\mu}} & 0 & 0 & 0 & 0\\
\hline
\frac{1}{2 \mu \bar{\mu}} & 0 & 0 & 0 & 0 & 0
\end{array} \right).\]

On the $4$-dimensional case, we have
\[\begin{array}{l}
H^{2, 0}(T) = \operatorname{Span} \left\{ \begin{array}{ccc}
x_1 \wedge x_2 & x_1 \wedge x_3 & x_1 \wedge x_4\\
x_2 \wedge x_3 & x_2 \wedge x_4 & x_3 \wedge x_4
\end{array} \right\},\\
H^{1, 1}(T) = \operatorname{Span} \left\{ \begin{array}{cccc}
x_1 \wedge \bar{x}_1 & x_1 \wedge \bar{x}_2 & x_1 \wedge \bar{x}_3 & x_1 \wedge \bar{x}_4\\
x_2 \wedge \bar{x}_1 & x_2 \wedge \bar{x}_2 & x_2 \wedge \bar{x}_3 & x_2 \wedge \bar{x}_4\\
x_3 \wedge \bar{x}_1 & x_3 \wedge \bar{x}_2 & x_3 \wedge \bar{x}_3 & x_3 \wedge \bar{x}_4\\
x_4 \wedge \bar{x}_1 & x_4 \wedge \bar{x}_2 & x_4 \wedge \bar{x}_3 & x_4 \wedge \bar{x}_4
\end{array} \right\},\\
H^{0, 2}(T) = \operatorname{Span} \left\{ \begin{array}{ccc}
\bar{x}_1 \wedge \bar{x}_2 & \bar{x}_1 \wedge \bar{x}_3 & \bar{x}_1 \wedge \bar{x}_4\\
\bar{x}_2 \wedge \bar{x}_3 & \bar{x}_2 \wedge \bar{x}_4 & \bar{x}_3 \wedge \bar{x}_4
\end{array} \right\}.
\end{array}\]
So we see that if we consider a $(2, 0)$-form
\[\sigma = \lambda_{12} x_1 \wedge x_2 + \lambda_{13} x_1 \wedge x_3 + \lambda_{14} x_1 \wedge x_4 + \lambda_{23} x_2 \wedge x_3 + \lambda_{24} x_2 \wedge x_4 + \lambda_{34} x_3 \wedge x_4,\]
then it is non-degenerate if and only if
\[\sigma^2 = \underbrace{2(\lambda_{12} \lambda_{34} - \lambda_{13} \lambda_{24} + \lambda_{14} \lambda_{23})}_{\mu} \cdot x_1 \wedge x_2 \wedge x_3 \wedge x_4 \neq 0.\]

Now, since we are dealing with $4$ indices it follows that $\nu_{ij}$ is one of the coefficient $\lambda$, to be precise it is the one corresponding to the complement of $\{i, j\}$.

An explicit computation of the Gram matrix yields to
\[\left( \begin{array}{c|c|c}
0 & 0 & X\\
0 & Y & 0\\
X^t & 0 & 0
\end{array} \right)\]
where
\[X = \frac{1}{2 \mu \bar{\mu}} \left( \begin{array}{cccccc}
\lambda_{34} \bar{\lambda}_{34} & -\lambda_{34} \bar{\lambda}_{24} & \lambda_{34} \bar{\lambda}_{23} & \lambda_{34} \bar{\lambda}_{14} & -\lambda_{34} \bar{\lambda}_{13} & \lambda_{34} \bar{\lambda}_{12}\\

-\lambda_{24} \bar{\lambda}_{34} & \lambda_{24} \bar{\lambda}_{24} & -\lambda_{24} \bar{\lambda}_{23} & -\lambda_{24} \bar{\lambda}_{14} & \lambda_{24} \bar{\lambda}_{13} & -\lambda_{24} \bar{\lambda}_{12}\\

\lambda_{23} \bar{\lambda}_{34} & -\lambda_{23} \bar{\lambda}_{24} & \lambda_{23} \bar{\lambda}_{23} & \lambda_{23} \bar{\lambda}_{14} & -\lambda_{23} \bar{\lambda}_{13} & \lambda_{23} \bar{\lambda}_{12}\\

\lambda_{14} \bar{\lambda}_{34} & -\lambda_{14} \bar{\lambda}_{24} & \lambda_{14} \bar{\lambda}_{23} & \lambda_{14} \bar{\lambda}_{14} & -\lambda_{14} \bar{\lambda}_{13} & \lambda_{14} \bar{\lambda}_{12}\\

-\lambda_{13} \bar{\lambda}_{34} & \lambda_{13} \bar{\lambda}_{24} & -\lambda_{13} \bar{\lambda}_{23} & -\lambda_{13} \bar{\lambda}_{14} & \lambda_{13} \bar{\lambda}_{13} & -\lambda_{13} \bar{\lambda}_{12}\\

\lambda_{12} \bar{\lambda}_{34} & -\lambda_{12} \bar{\lambda}_{24} & \lambda_{12} \bar{\lambda}_{23} & \lambda_{12} \bar{\lambda}_{14} & -\lambda_{12} \bar{\lambda}_{13} & \lambda_{12} \bar{\lambda}_{12}
\end{array} \right),\]
and $Y$ has to be computed.

\subsection{Kodaira surface}\label{ex: Kodaira surface}
We quickly discuss an example that does not satisfy the $\del\delbar$-Lemma, which was also used in Example \ref{exam: kodaira fibration lambda}.

Consider the standard Kodaira surface, i.e.\ the quotient space of the group
\[G = \left\{ \left( \begin{array}{ccc}
1 & \bar{z}_1 & z_2\\
0 & 1 & z_1\\
0 & 0 & 1
\end{array} \right) \middle| z_1, z_2 \in \IC \right\}\]
by its lattice $\Gamma$ consisting of matrices with entries in $\IZ[\sqrt{-1}]$: so $X = \Gamma \backslash G$. There are the following $(1, 0)$-forms: $\omega_1 = dz_1$ and $\omega_2 = dz_2 - \bar{z}_1 dz_1$, which together to their complex conjugates give all the $1$-forms.

There is up to scalars, only one $(2, 0)$-form, namely $\omega_1 \wedge \omega_2$, which is also $d$-closed. Then a complex symplectic structure on $X$ is $\sigma = \mu \omega_1 \wedge \omega_2$ for $\mu \neq 0$. We will assume $\sigma$ normalized, so that
\[\int_X \sigma \bar{\sigma} = \mu \bar{\mu} \int_X \underbrace{\omega_1 \wedge \omega_2}_{vol} = 1.\]
Observe that since we are on a surface, then
\[\langle \psi, \eta \rangle = \frac{1}{2} \int_X \psi \wedge \eta.\]

We can also see that
\[\begin{array}{l}
H^1_{dR}(X, \IC) = \operatorname{Span} \{ \omega_1, \bar{\omega}_1, \omega_2 + \bar{\omega}_2 \}\\
H^2_{dR}(X, \IC) = \operatorname{Span} \{ \omega_1 \wedge \omega_2, \omega_1 \wedge \bar{\omega}_2, \omega_2 \wedge \bar{\omega}_1, \bar{\omega}_1 \wedge \bar{\omega}_2\}
\end{array}\]
while for the Dolbeault cohomology we have
\[\begin{array}{l}
H^{1, 0}_{\delbar}(X) = \operatorname{Span} \{ \omega_1 \}\\
H^{0, 1}_{\delbar}(X) = \operatorname{Span} \{ \bar{\omega}_1, \bar{\omega}_2 \}\\
H^{2, 0}_{\delbar}(X) = \operatorname{Span} \{ \omega_1 \wedge \omega_2 \}\\
H^{1, 1}_{\delbar}(X) = \operatorname{Span} \{ \omega_1 \wedge \bar{\omega}_2, \omega_2 \wedge \bar{\omega}_1 \}\\
H^{0, 2}_{\delbar}(X) = \operatorname{Span} \{ \bar{\omega}_1 \wedge \bar{\omega}_2 \}.
\end{array}\]
This shows that $X$ does not satisfy the $\del \delbar$-Lemma, looking to its $1$-forms, but its second cohomology group splits into the direct sum of types and conjugation is an isomorphism.

But then the Beauville--Bogomolov--Fujiki form has Gram matrix (with respect to the basis for $H^2_{dR}(X, \IC)$ above)
\[\frac{1}{2 \mu \bar{\mu}} \left( \begin{array}{c|cc|c}
0 & 0 & 0 & 1\\
\hline
0 & 0 & 1 & 0\\
0 & 1 & 0 & 0\\
\hline
1 & 0 & 0 & 0
\end{array} \right),\]
showing that the Beauville--Bogomolov--Fujiki quadric of a Kodaira surface is smooth irreducible.

\bibliographystyle{alpha}
\bibliography{symplecticdeldelbar}

\begin{thebibliography}{BHPVdV04}

\bibitem[AK17]{angella-kasuya17}
Daniele Angella and Hisashi Kasuya.
\newblock Cohomologies of deformations of solvmanifolds and closedness of some
  properties.
\newblock {\em North-West. Eur. J. Math.}, 3:75--105, i, 2017.

\bibitem[Ang14]{Angella14}
Daniele Angella.
\newblock {\em Cohomological aspects in complex non-{K}\"ahler geometry},
  volume 2095 of {\em Lecture Notes in Mathematics}.
\newblock Springer, Cham, 2014.

\bibitem[AT13]{AngellaTomassini}
Daniele Angella and Adriano Tomassini.
\newblock On the {$\partial\overline{\partial}$}-lemma and {B}ott-{C}hern
  cohomology.
\newblock {\em Invent. Math.}, 192(1):71--81, 2013.

\bibitem[Bea83]{bea}
Arnaud Beauville.
\newblock Vari\'et\'es {K}\"ahleriennes dont la premi\`ere classe de {C}hern
  est nulle.
\newblock {\em J. Differential Geom.}, 18(4):755--782 (1984), 1983.

\bibitem[BHPVdV04]{BHPV}
Wolf~P. Barth, Klaus Hulek, Chris A.~M. Peters, and Antonius Van~de Ven.
\newblock {\em Compact complex surfaces}, volume~4 of {\em Ergebnisse der
  Mathematik und ihrer Grenzgebiete. 3. Folge. A Series of Modern Surveys in
  Mathematics [Results in Mathematics and Related Areas. 3rd Series. A Series
  of Modern Surveys in Mathematics]}.
\newblock Springer-Verlag, Berlin, second edition, 2004.

\bibitem[BM47]{Bochner-Montgomery47}
Salomon Bochner and Deane Montgomery.
\newblock Groups on analytic manifolds.
\newblock {\em Ann. of Math. (2)}, 48:659--669, 1947.

\bibitem[CT17]{CattaneoTomassini16}
Andrea Cattaneo and Adriano Tomassini.
\newblock Complex symplectic structures and the $\partial
  \bar{\partial}$-lemma.
\newblock {\em Annali di Matematica Pura ed Applicata (1923 -)}, May 2017.

\bibitem[DGMS75]{DGMS}
P.~Deligne, P.~Griffiths, J.~Morgan, and D.~Sullivan.
\newblock Real homotopy theory of {K}{\"a}hler manifolds.
\newblock {\em Inventiones mathematicae}, 29:245--274, 1975.

\bibitem[FH91]{Fulton-Harris}
William Fulton and Joe Harris.
\newblock {\em Representation theory}, volume 129 of {\em Graduate Texts in
  Mathematics}.
\newblock Springer-Verlag, New York, 1991.
\newblock A first course, Readings in Mathematics.

\bibitem[GH78]{Griffiths-Harris}
Phillip Griffiths and Joseph Harris.
\newblock {\em {Principles of algebraic geometry}}.
\newblock Wiley-Interscience [John Wiley \& Sons], New York, 1978.
\newblock Pure and Applied Mathematics.

\bibitem[Gua95a]{Guan95a}
Daniel Guan.
\newblock Examples of compact holomorphic symplectic manifolds which are not
  {K}\"ahlerian. {II}.
\newblock {\em Invent. Math.}, 121(1):135--145, 1995.

\bibitem[Gua95b]{Guan95b}
Daniel Guan.
\newblock Examples of compact holomorphic symplectic manifolds which are not
  {K}\"ahlerian. {III}.
\newblock {\em Internat. J. Math.}, 6(5):709--718, 1995.

\bibitem[Huy03]{HuybrechtsHKM}
Daniel Huybrechts.
\newblock Compact {H}yperk\"ahler {M}anifolds.
\newblock In {\em Calabi-{Y}au manifolds and related geometries
  ({N}ordfjordeid, 2001)}, Universitext, pages 161--225. Springer, Berlin,
  2003.

\bibitem[Huy05]{Huybrechts}
Daniel Huybrechts.
\newblock {\em Complex geometry}.
\newblock Universitext. Springer-Verlag, Berlin, 2005.

\bibitem[Kaw81]{Kawamata81}
Yujiro Kawamata.
\newblock Characterization of abelian varieties.
\newblock {\em Compositio Math.}, 43(2):253--276, 1981.

\bibitem[Kir15]{Kirschner:2015}
Tim Kirschner.
\newblock {\em Period mappings with applications to symplectic complex spaces},
  volume 2140 of {\em Lecture Notes in Mathematics}.
\newblock Springer, Cham, 2015.

\bibitem[KKP08]{KKP08}
L.~Katzarkov, M.~Kontsevich, and T.~Pantev.
\newblock Hodge theoretic aspects of mirror symmetry.
\newblock In {\em From {H}odge theory to integrability and {TQFT}
  tt*-geometry}, volume~78 of {\em Proc. Sympos. Pure Math.}, pages 87--174.
  Amer. Math. Soc., Providence, RI, 2008.

\bibitem[Leh]{Lehn:05}
Manfred Lehn.
\newblock {L}agrangefaserungen: {D}er {S}atz von {M}atsushita ({O}berseminar
  {M}ainz {W}intersemester 2005/2006).
\newblock unpublished note.

\bibitem[Lic69]{MR0253253}
Andr{\'e} Lichnerowicz.
\newblock Vari\'et\'es k\"ahl\'eriennes \`a premi\`ere classe de {C}hern
  positive ou nulle.
\newblock {\em C. R. Acad. Sci. Paris S\'er. A-B}, 268:A876--A880, 1969.

\bibitem[Mat71]{MR0364670}
Y.~Matsushima.
\newblock On {H}odge manifolds with zero first {C}hern class.
\newblock In {\em Differentialgeometrie im {G}ro\ss en ({T}agung, {M}ath.
  {F}orschungsinst., {O}berwolfach, 1969)}, pages 251--257. Ber. Math.
  Forschungsinst. Oberwolfach, No. 4. Bibliographsches Inst., Mannheim, 1971.

\bibitem[Nak75]{nakamura75}
Iku Nakamura.
\newblock Complex parallelisable manifolds and their small deformations.
\newblock {\em J. Differential Geom.}, 10(1):85--112, 1975.

\bibitem[Rol11]{rollenske11}
S{\"o}nke Rollenske.
\newblock The {K}uranishi space of complex parallelisable nilmanifolds.
\newblock {\em J. Eur. Math. Soc. (JEMS)}, 13(3):513--531, 2011.

\bibitem[Tom01]{Toma01}
Matei Toma.
\newblock Compact moduli spaces of stable sheaves over non-algebraic surfaces.
\newblock {\em Doc. Math.}, 6:11--29 (electronic), 2001.

\bibitem[Tos15]{Tosatti15}
Valentino Tosatti.
\newblock Non-{K}\"ahler {C}alabi-{Y}au manifolds.
\newblock In {\em Analysis, complex geometry, and mathematical physics: in
  honor of {D}uong {H}. {P}hong}, volume 644 of {\em Contemp. Math.}, pages
  261--277. Amer. Math. Soc., Providence, RI, 2015.

\bibitem[Uen74]{Ueno74}
Kenji Ueno.
\newblock Introduction to classification theory of algebraic varieties and
  compact complex spaces.
\newblock In {\em Classification of algebraic varieties and compact complex
  manifolds}, pages 288--332. Lecture Notes in Math., Vol. 412. Springer,
  Berlin, 1974.

\bibitem[Uen80]{Ueno80}
Kenji Ueno.
\newblock On three-dimensional compact complex manifolds with nonpositive
  {K}odaira dimension.
\newblock {\em Proc. Japan Acad. Ser. A Math. Sci.}, 56(10):479--483, 1980.

\bibitem[Voi07]{voisin}
Claire Voisin.
\newblock {\em Hodge theory and complex algebraic geometry. {I}}, volume~76 of
  {\em Cambridge Studies in Advanced Mathematics}.
\newblock Cambridge University Press, Cambridge, english edition, 2007.
\newblock Translated from the French by Leila Schneps.

\bibitem[Wav69]{Wavrik69}
John~J. Wavrik.
\newblock Obstructions to the existence of a space of moduli.
\newblock In {\em Global {A}nalysis ({P}apers in {H}onor of {K}. {K}odaira)},
  pages 403--414. Univ. Tokyo Press, Tokyo, 1969.

\bibitem[Yos01]{Yoshioka01}
K{\=o}ta Yoshioka.
\newblock Moduli spaces of stable sheaves on abelian surfaces.
\newblock {\em Math. Ann.}, 321(4):817--884, 2001.

\end{thebibliography}

\end{document}